\pgfplotsset{compat=1.11}
\newlength{\abstractwidth}
\flushbottom \thispagestyle{empty} \pagestyle{plain}
\renewcommand{\thanks}[1]{\footnote{#1}} 
\newcommand{\be}{\begin{equation}}
\newcommand{\bea}{\begin{eqnarray}}
\newcommand{\eea}{\end{eqnarray}} \newcommand{\ee}{\end{equation}}
 \def\ba{\begin{eqnarray}}
\def\ea{\end{eqnarray}}
\newcommand{\norm}[1]{\left\lVert#1\right\rVert}
\def\[{{\bf [}}
\def\]{{\bf ]}}
\begin{document}
\newtheorem{theorem}{Theorem} [section]
\newtheorem{proposition}[theorem]{Proposition} 
\newtheorem{lemma}[theorem]{Lemma} 
\newtheorem{corollary}[theorem]{Corollary} 
\newtheorem{definition}[theorem]{Definition} 
\newtheorem{conjecture}[theorem]{Conjecture} 
\newtheorem{example}[theorem]{Example} 
\newtheorem{claim}[theorem]{Claim} 
\newtheorem{remark}[theorem]{Remark} 
\newtheorem*{proposition*}{Proposition}

\begin{centering}
 
\textup{\LARGE\bf The Limit of the Inverse Mean \\
\vspace{0.3cm} Curvature Flow on a Torus}

\vspace{10 mm}

\textnormal{\large Brian Harvie} \\

\vspace{.5 in}
\begin{abstract}
{\small For an $H>0$ rotationally symmetric embedded torus $N_{0} \subset \mathbb{R}^{3}$ evolved by Inverse Mean Curvature Flow, we show that the total curvature $|A|$ remains bounded up to the singular time $T_{\max}$. This in turn implies convergence of the $N_{t}$ to a $C^{1}$ rotationally symmetric embedded torus $N_{T_{\max}}$ as $t \rightarrow T_{\max}$ without rescaling, contrasting sharply with the behavior of other extrinsic flows. Later, we note a scale-invariant $L^{2}$ energy estimate on any flow solution in $\mathbb{R}^{3}$ that may be useful in ruling out curvature blowup near singularities more generally.
}
\end{abstract}
\end{centering}
\section{Introduction}
Consider a standard round torus $N_{0} \subset \mathbb{R}^{3}$ with inner radius chosen small enough so that its mean curvature $H$ is everywhere positive. We would like to examine the evolution of $N_{0}$ under Inverse Mean Curvature Flow. Given an $n$-dimensional closed, oriented smooth manifold $N$, a one-parameter family of immersions $F: N \times [0,T) \rightarrow \mathbb{R}^{n+1}$ moves by Inverse Mean Curvature Flow (IMCF) if

\begin{equation} \label{IMCF}
    \frac{\partial N_{t}}{\partial t} (p,t) = \frac{1}{H} \nu(p,t)
\end{equation}
for each $(p,t) \in N \times [0,T)$, where $N_{t}=F_{t}(N)$ and $\nu$ and $H>0$ are the outward-pointing unit normal and mean curvature of $N_{t}$, respectively. Although in general singularities need not develop under IMCF (see \cite{claus}, \cite{huisken2}, \cite{Urbas}, \cite{me2}), a singularity will indeed form for the thin torus under the classical flow by the argument in Section 0 of \cite{huisken}: as the torus expands, the inner ring moves toward the central axis of rotation. Since there is a lower bound on flow speed for IMCF, the mean curvature along this ring must eventually reach zero, thereby terminating the flow. 

A previously unaddressed question is whether or not the hole of the torus will be completely filled by the singular time. That is, whether or not this torus will ``pinch" at its axis of rotation at the singular time $T_{\max}$, or if the flow terminates due to mean curvature approaching zero before this can happen. The latter possibility is quite intriguing, since if pinching did not occur the second fundamental form $A$ of $N_{t}$ would remain uniformly bounded in $L^{\infty}$ norm up to $T_{\max}$. This would suggest that, after passing to a subsequence in time, the $N_{t}$'s converge to a limit surface $N_{T_{\max}}$ as $t \rightarrow T_{\max}$. For other well-known extrinsic flows such as Mean Curvature Flow, Gauss Curvature Flow, and Surface Tension Flow, no such limit surface should exist at the time $T_{\max}$ without first rescaling.

We show in this paper that the curvature remains bounded and hence a limit surface does indeed exist under IMCF in this context. In fact, our proof applies more generally to any $H>0$ embedded torus in $\mathbb{R}^{3}$ with rotational symmetry about its central axis.

\begin{theorem} \thlabel{main}
Let $N_{0}=F_{0}(\mathbb{T}^{2}) \subset \mathbb{R}^{3}$ be an $H>0$, rotationally symmetric embedded torus and $F: \mathbb{T}^{2} \times [0,T_{\max}) \rightarrow \mathbb{R}^{3}$ the corresponding maximal solution to \eqref{IMCF}. Then $T_{\max} < +\infty$ and $\lim_{t \rightarrow T_{\max}} \max_{N_{t}} |A| \leq L < + \infty$. In particular, there exists a subsequence of times $t_{k} \nearrow T_{\max}$ and corresponding diffeomorphisms $\alpha_{k}: \mathbb{T}^{2} \rightarrow \mathbb{T}^{2}$ so that the maps $\widetilde{F}_{t_{k}}=F_{t_{k}} \circ \alpha_{k}: \mathbb{T}^{2} \rightarrow \mathbb{R}^{3}$ converge in $C^{1}$ topology to an immersion $\widetilde{F}_{T_{\max}}$. Furthermore, $\widetilde{F}_{T_{\max}}(\mathbb{T}^{2}) \subset \mathbb{R}^{3}$ is also a rotationally symmetric embedded torus.
\end{theorem}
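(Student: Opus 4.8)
The plan is to reduce \eqref{IMCF} to a scalar evolution for the profile curve, show that the only obstruction to a uniform curvature bound is the closing of the hole, rule that out by a differential inequality at the inner equator reinforced by a global Willmore‑type estimate, and then obtain the $C^1$ limit by a soft compactness argument. First I would record the reduction and the elementary bounds. Since \eqref{IMCF} is equivariant under rotations about the central axis and classical solutions are unique, rotational symmetry is preserved: $N_t$ is generated by an embedded closed profile curve $\gamma_t$ in the open half‑plane $\{x>0\}$, where $x$ denotes distance to the axis. Describing $\gamma_t$ by its tangent angle $\theta$ in arclength $s$, the principal curvatures of $N_t$ are $\kappa_1=\theta_s$ and $\kappa_2=\sin\theta/x$ (up to a fixed orientation convention), so $H=\theta_s+\sin\theta/x$ and $|A|^2=\theta_s^2+\sin^2\theta/x^2$. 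Hamilton's maximum principle applied to $\partial_t H=H^{-2}\Delta H-2H^{-3}|\nabla H|^2-H^{-1}|A|^2$ shows $\max_{N_t}H$ is non‑increasing, so $0<H\le H_0:=\max_{N_0}H$ and the speed satisfies $1/H\ge 1/H_0$. Writing $u(t)=\min_{N_t}x$ for the distance of $N_t$ to the axis, the minimum is attained along the inner equator, where the normal points directly at the axis, so $\frac{d}{dt}u=-1/H_{\mathrm{ie}}$ with $H_{\mathrm{ie}}$ the mean curvature there; in particular $u$ is strictly decreasing with $u(t)\le u(0)-t/H_0$, and since a classical solution needs $u>0$ this already yields $T_{\max}\le u(0)H_0<\infty$.

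Next I would observe that the desired curvature bound is \emph{equivalent} to the hole not closing. From $\theta_s=H-\sin\theta/x$ and $0<H\le H_0$ one gets $|A|\le\sqrt2\,(H_0+1/u(t))$ everywhere; conversely, along the inner equator $\kappa_2=-1/u(t)$ exactly, so $\kappa_1=H_{\mathrm{ie}}+1/u(t)\ge 1/u(t)$ and hence $|A|\ge 1/u(t)$. Thus $\lim_{t\to T_{\max}}\max_{N_t}|A|<\infty$ holds if and only if $\liminf_{t\to T_{\max}}u(t)>0$, and the theorem reduces to showing the hole stays open.

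The heart of the matter — and the step I expect to be the main obstacle — is the lower bound on $u(t)$. Suppose $u(t)\to 0$. Hamilton's trick for $h(t):=\min_{N_t}H$ gives $\frac{d}{dt}h\ge -|A|^2/h$ at the minimizing point (the $\Delta H$ contribution there being non‑negative), and once $u$ is small the minimum of $H$ sits at the inner equator, where $h=H_{\mathrm{ie}}$ and $|A|^2=(h+1/u)^2+1/u^2$; combined with $\dot u=-1/h$ this gives a closed differential inequality for $w:=u\,h$ of the shape $\dot w\ge -C(1+w+1/w)$, which together with the a priori bound $T_{\max}\le u(0)H_0$ keeps $w$, hence $u$, bounded below \emph{provided} $w(0)$ is not too small. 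To handle tori whose initial mean curvature is only barely positive one must inject global information: the Gauss–Bonnet identity $\int_{N_t}|A|^2\,d\mu=\int_{N_t}H^2\,d\mu$ (the torus has $\int K=0$), the monotonicity $\frac{d}{dt}\int_{N_t}H^2\,d\mu=-\int_{N_t}\big(2|\nabla H|^2/H^2+H^2\big)\,d\mu\le 0$ and the resulting scale‑invariant estimate $\int_0^{T_{\max}}\!\int_{N_t}|\nabla H|^2/H^2\,d\mu\,dt<\infty$, and the pointwise fact that $H>0$ forces the portion of $\gamma_t$ on which the normal points toward the axis to be convex, with a controlled integral of $|\sin\theta|/x$ there. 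Combining these with the differential inequality so as to prevent $h$ from decaying to zero faster than $u$ — equivalently, to secure a genuine positive lower bound on $h$ near $T_{\max}$ — is where the real work lies.

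Finally, granted $|A|\le L$ on $[0,T_{\max})$, the limit surface follows by standard compactness. The bound gives a uniform $C^{1,1}$ estimate for $N_t$, hence a uniform interior and exterior ball condition of radius $1/L$; together with $|N_t|=e^t|N_0|\le e^{T_{\max}}|N_0|$ and $u(t)\ge\delta>0$, the profile curves have uniformly bounded length and lie in the fixed annulus $\{\delta\le x\le C\}$. Reparametrizing each $N_{t_k}$ proportionally to arclength produces diffeomorphisms $\alpha_k:\mathbb{T}^2\to\mathbb{T}^2$ with $\widetilde F_{t_k}=F_{t_k}\circ\alpha_k$ uniformly bounded in $C^{1,1}$, so Arzel\`a–Ascoli yields a subsequence converging in $C^1$ to an immersion $\widetilde F_{T_{\max}}$. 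The uniform two‑sided ball condition keeps distinct sheets uniformly separated and passes to the limit, so $\widetilde F_{T_{\max}}$ is an embedding; its image is rotationally symmetric since each $N_{t_k}$ is and this property is closed under $C^1$ convergence, and it has genus one because it is $C^1$‑close to the tori $N_{t_k}$. Hence $N_{T_{\max}}=\widetilde F_{T_{\max}}(\mathbb{T}^2)$ is a rotationally symmetric embedded torus.
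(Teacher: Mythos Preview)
Your reduction to the profile curve, the equivalence between a uniform bound on $|A|$ and $u(t)\not\to 0$, and the finite-time argument are all correct and match the paper's setup. The compactness step at the end is also essentially fine; the paper invokes Langer's compactness theorem rather than a hands-on Arzel\`a--Ascoli argument, but the content is the same, and embeddedness of the limit is argued in the paper via the two-graph structure of $\rho_t$ rather than a ball condition.

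The genuine gap is exactly where you flag it: the argument for $\liminf_{t\to T_{\max}} u(t)>0$ is not carried out, and there is no indication it can be closed along the lines you sketch. The differential inequality $\dot w\ge -C(1+w+1/w)$ for $w=uh$ does not keep $w$ away from zero within the available time without a size assumption on $w(0)$, as you concede. The global estimates you then invoke --- $\int|A|^2=\int H^2$ on the torus, monotonicity of $\int H^2$, the space-time bound on $\int|\nabla H|^2/H^2$ --- are all integral quantities that do not control the pointwise ratio of $h$ to $1/u$ at the inner equator. None of them distinguishes the actual scenario (where $H\to 0$ on a set of small measure while $u$ stays bounded below) from the scenario you must exclude (where $u\to 0$ and $H$ remains comparable to $1/u$ on a thin collar); in both cases the listed integrals behave the same way. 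You mention convexity of the lower arc but do not extract anything quantitative from it.

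The paper's mechanism is completely different and does not use an ODE at the equator. Using the enclosure property $E_{t_0}\subset E_t$ together with convexity of the lower graph, it shows that on a \emph{fixed} horizontal strip $\{|x_1|<a\}$ the lower graph has uniformly bounded slope, so $|\langle e_1,\nu\rangle|\le 1-\epsilon$ on the two boundary circles. Gauss--Bonnet on that annular strip $S'_t$ then gives the strict, time-independent, scale-invariant bound $\int_{S'_t}|K|\,d\mu\le 4\pi(1-\epsilon)$. If $u(t)\to 0$, one rescales by $1/u(t)$; since $\max H$ is bounded the rescaled mean curvature tends to zero, and the rescaled lower graphs converge in $C^2_{\mathrm{loc}}$ to a complete rotationally symmetric minimal surface, i.e.\ a catenoid. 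But the catenoid has total absolute Gauss curvature exactly $4\pi$, so a sufficiently large compact piece already carries more than $4\pi(1-\epsilon)$, contradicting the bound. The decisive input is thus the \emph{strict} gap $4\pi(1-\epsilon)<4\pi$ coming from the slope estimate on the fixed-width strip; your Willmore-type identities only see $\int_{N_t}K=0$ and cannot manufacture this gap.
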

Our argument by contradiction utilizes only elementary properties of the flow and an application of the Gauss-Bonnet Theorem.

This result raises the question of how general this behavior is for singular solutions of \eqref{IMCF}. As a first step toward answering this question, we also prove an $L^{2}$ energy estimate on $|A|$ which applies to any solution $N_{t}$ of \eqref{IMCF} in $\mathbb{R}^{3}$. As this estimate is also scale invariant, this may provide a way to rule out blow-up in $|A|$ altogether via rescaling arguments.

The paper is organized as follows: in Section 2, we consider the generating curve $\rho_{0}$ of a rotationally symmetric embedded torus $N_{0}$ in the $(x_{1},x_{2})$-plane, and we demonstrate that the generating curve $\rho_{t}$ of $N_{t}$ must remain embedded. This also ensures the formation of a singularity for $N_{0}$ under \eqref{IMCF} within a prescribed time interval by Corollary 2 in \cite{me}. In Section 3, we obtain a sharp upper bound on the $L^{1}$ norm of the Gauss Curvature $K$ over a uniform neighborhood of the ring of the expanding torus closest to the axis of rotation via the Gauss-Bonnet Theorem. We prove in Section 4 that $N_{t}$ cannot reach this axis by the time $T_{\max}$: if it did, we could rescale this neighborhood about this point and obtain convergence to a catenoid, which would contradict the integral bound on $K$. In Section 5, we apply a compactness theorem from \cite{Langer1985ACT} to rule out degeneration in the induced metric of $N_{t}$ near the singular time, so that the un-scaled $N_{t}$'s approach the embedded $C^{1}$ limit surface as $t \rightarrow T_{\max}$.

We consider singularities of any solution $N_{t}$ to \eqref{IMCF} in $\mathbb{R}^{3}$ in Section 6. The Gauss-Bonnet Theorem gives a uniform lower bound on the integral of $K$ for any such solution, which in turn controls the entire $L^{2}$ norm of $A$ independently of time. Since $n=2$, this estimate is scale invariant, and hence may be useful in ruling out blow-up in $\max_{N_{t}} |A|$ via rescaling arguments.

\section*{Acknowledgements}
I would like to thank my thesis advisor Adam Jacob, whose weekly discussions with me made this project possible, and the University of California, Davis Department of Mathematics for their financial support throughout my graduate studies.

\section{Preserving Embeddedness}
Once again, we consider an $H>0$ torus $N_{0} \subset \mathbb{R}^{3}$ which is obtained by revolving a simple closed curve in the upper half of the $(x_{1},x_{2})$-plane about the $x_{1}$ axis. Let $\{ e_{1}, e_{2} \}$ be the standard basis in $\mathbb{R}^{2}$, and $\nu$ be the outward normal of $\rho_{0}$ in the $(x_{1}, x_{2})$-plane. We parametrize $\rho_{0}$ by arc length, taking $s=0$ to be any point which minimizes the height $u=\langle \rho_{0}, e_{2} \rangle$. The principal curvatures of $N_{0}$ are the same along the ring in $\mathbb{R}^{3}$ generated by a point $\rho(s)$ on this curve. One of these principal curvatures $p(s)$ corresponds to rotation about the $x_{1}$ axis. This curvature is equal to

\begin{equation}
    p(s)= \langle \nu, e_{2} \rangle u^{-1} (s),
\end{equation}
and the other principal curvature $k(s)$ equals the curvature of $\rho_{0}$ in the plane at $s$. Then we write

\begin{equation}
    H(s)= k(s) + p(s)
\end{equation}
for the mean curvature $H$ of $N_{0}$. The assumption that $H$ is everywhere positive on $N_{0}$ gives us a strong profile for the curve $\rho_{0}$. Namely, $\rho_{0}$ must be the union of two graphs over the $x_{1}$ coordinate which correspond to the ``top" and ``bottom" of the curve.

\begin{proposition} \thlabel{graphs}
Let $\rho_{0}$ be the generating curve for an embedded, $H>0$, rotationally symmetric torus $N_{0} \subset \mathbb{R}^{3}$, and call $a_{0}=\min_{x \in \rho_{0}} \langle \rho_{0}, e_{1} \rangle(x)$ and $b_{0}=\max_{x \in \rho_{0}} \langle \rho_{0}, e_{1} \rangle(x)$. Then $\rho_{0}$ is the disjoint union of two graphs for functions $w_{0}: (a_{0},b_{0}) \rightarrow \mathbb{R}$ and $v_{0}: [a_{0},b_{0}] \rightarrow \mathbb{R}$ with $w_{0}(x) < v_{0}(x)$ over $(a_{0},b_{0})$. Furthermore, $\text{\upshape{graph}}(w_{0})$ is convex, and the outward unit normal $\nu$ of $\rho_{0}$ satisfies $\langle \nu, e_{2} \rangle <0$ on $\text{\upshape{graph}}(w_{0})$ and $\langle \nu, e_{2} \rangle>0$ on $\text{\upshape{Int}}(\text{\upshape{graph}}(v_{0}))$.
\end{proposition}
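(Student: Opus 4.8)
The plan is to reduce the whole statement to the behavior of the turning angle of $\rho_{0}$. Parametrize $\rho_{0}$ by arc length on $\mathbb{R}/L\mathbb{Z}$, oriented so that the region it bounds in the $(x_{1},x_{2})$-plane lies to the left, and let $\theta(s)$ be the angle of the unit tangent $T(s)$, normalized so that $\theta(0)=0$: at $s=0$ the tangent is horizontal because $u$ is minimized there, and the left-region convention (the interior lies above the lowest point, hence to the left of $e_{1}$) forces $T(0)=e_{1}$; since $\rho_{0}$ is a simple closed curve, $\theta$ increases by $2\pi$ over one period. Writing $x:=\langle\rho_{0},e_{1}\rangle$, I would first record the elementary identities $\frac{dx}{ds}=\cos\theta$, $\langle\nu,e_{2}\rangle=-\cos\theta$ (so that $p=-\cos\theta\,u^{-1}$), and $k=\theta'$ (signed curvature, with the sign convention already in force). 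Substituting into $H=k+p$ gives the single identity on which everything rests,
\[
  \theta'(s)=H(s)+\frac{\cos\theta(s)}{u(s)}.
\]
Since the torus is embedded it misses the $x_{1}$-axis, so $u>0$ along $\rho_{0}$; together with $H>0$ this yields $\theta'\geq H>0$ at every $s$ where $\cos\theta(s)\geq 0$ — in particular wherever $\langle\nu,e_{2}\rangle\leq 0$.

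The one genuinely delicate point, and the step I expect to be the main obstacle, is to promote this observation to the statement that $\cos\theta$ changes sign exactly twice. The mechanism is that $\theta'>0$ holds at every $s$ with $\theta(s)\equiv\pm\pi/2\pmod{2\pi}$, so the tangent angle can cross these ``vertical'' values only while increasing. A short argument by contradiction — examining a hypothetical interior extremum of $\theta$, or a hypothetical downward crossing of $\pm\pi/2$, either of which would occur at a value of $\theta$ where $\cos\theta\geq 0$ and hence $\theta'>0$ — then shows that $\theta$ stays in $[0,2\pi]$, attains each of $\pi/2$ and $3\pi/2$ exactly once, say at $0<s_{1}<s_{2}<L$, and can never return once it has passed either value. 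Consequently $\cos\theta>0$ on the open arc $A_{1}$ running from $s_{2}$ forward through $s=0$ to $s_{1}$, while $\cos\theta<0$ on the interior of the complementary arc $A_{2}=[s_{1},s_{2}]$, with $\nu=e_{1}$ at $s_{1}$ and $\nu=-e_{1}$ at $s_{2}$. I do not expect to need $\theta$ to be monotone along $A_{2}$ — it may genuinely oscillate there, which is precisely why $v_{0}$ need not be convex — only that $\cos\theta$ keeps its sign on each arc.

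The rest is bookkeeping. Since $\frac{dx}{ds}=\cos\theta$, the coordinate $x$ is strictly increasing along $A_{1}$ and strictly decreasing along $A_{2}$, so $x|_{\rho_{0}}$ attains its minimum $a_{0}$ only at $s_{2}$ and its maximum $b_{0}$ only at $s_{1}$; hence $A_{1}$ is the graph of a function $w_{0}:(a_{0},b_{0})\to\mathbb{R}$ and $A_{2}$ is the graph of a function $v_{0}:[a_{0},b_{0}]\to\mathbb{R}$, and $\rho_{0}=\mathrm{graph}(w_{0})\sqcup\mathrm{graph}(v_{0})$. On $A_{1}$ we have $\langle\nu,e_{2}\rangle=-\cos\theta<0$, and on $\mathrm{Int}(A_{2})$ we have $\langle\nu,e_{2}\rangle=-\cos\theta>0$, which is the asserted sign of the normal; and on $A_{1}$ the displayed identity gives $k=\theta'\geq H>0$, i.e.\ $\mathrm{graph}(w_{0})$ is convex. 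Finally, embeddedness of $\rho_{0}$ forces $A_{1}$ and $A_{2}$ to be disjoint, so $w_{0}-v_{0}$ is continuous and nowhere zero on $(a_{0},b_{0})$, hence of constant sign; evaluating at $x_{0}:=\langle\rho_{0}(0),e_{1}\rangle\in(a_{0},b_{0})$, where $w_{0}(x_{0})=u(0)=\min_{\rho_{0}}u\leq v_{0}(x_{0})$, pins this sign down to $w_{0}<v_{0}$ throughout $(a_{0},b_{0})$, which completes the proof.
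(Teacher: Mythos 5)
Your argument is correct, and it reaches the same conclusion by a genuinely different and, in my view, cleaner route. The paper works directly with the quantity $\langle e_{2},\nu\rangle$ and the differentiated relation $\tfrac{d}{ds}\langle e_{2},\nu\rangle(s_{0})=\pm k(s_{0})$ at its zeros, and then must split into two cases according to the sign of $\partial_{s}\cdot e_{2}$ at a hypothetical bad zero $s'$; the second case requires an auxiliary geometric construction (sweeping vertical lines $L_{s}$ into the enclosed region to locate a point with $\nu=-e_{1}$ and $k\le 0$). You instead package everything into the turning angle: the identity $\theta'=H+\cos\theta/u$, with $H>0$ and $u>0$, forces $\theta'>0$ whenever $\cos\theta\ge 0$, and the level-set barrier argument at $\theta\equiv\pm\pi/2\pmod{2\pi}$ then yields in one stroke that $\theta$ crosses each vertical value exactly once. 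This makes the case analysis, the convexity of $\text{graph}(w_{0})$ (since $k=\theta'\ge H$ wherever $\cos\theta>0$), and the identification of the two monotone arcs all fall out of the same inequality, and it avoids the paper's somewhat delicate Case II entirely. The only phrasing I would tighten: an interior extremum of $\theta$ does \emph{not} automatically occur where $\cos\theta\ge 0$ (as you yourself note, $\theta$ may oscillate on $A_{2}$), so the barrier clause (``a hypothetical downward crossing of $\pm\pi/2$'') is the one carrying the load and should be stated as the argument, not as an alternative to the extremum reading. With that cosmetic repair the proof is complete and self-contained.
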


\begin{proof}
Once again, we parametrize by arc length so that $\rho_{0}(0)=\rho_{0}(\ell)$. By taking $\rho_{0}(0)$ to be a point which minimizes the height $u$, we know $\langle e_{2}, \nu \rangle (0) <0$, and hence this is also true in some neighborhood of this point. Let $[0,s_{1})$ and $(s_{2},\ell]$ be the largest intervals containing $0$ and $\ell$ respectively over which $\langle e_{2}, \nu \rangle <0$. We want to show that $\langle e_{2}, \nu \rangle (s) >0$ for each $s \in (s_{1},s_{2})$.
At any point $s_{0}$ where $\langle e_{2}, \nu \rangle$ vanishes, we have in view of the product rule and the fact that $\partial_{s}(s_{0})= \pm e_{2}$

\begin{equation} \label{arc}
    \frac{d}{ds} \langle e_{2}, \nu \rangle (s_{0})= \langle \nabla_{s} e_{2}, \nu \rangle + \langle e_{2}, \nabla_{s} \nu \rangle = \langle e_{2}, \nabla_{s} \nu \rangle  = \pm \langle \partial_{s}, \nabla_{s} \nu \rangle (s_{0})= \pm k(s_{0}). 
\end{equation}

Since $\langle e_{2}, \nu \rangle (s_{1})=0$, and hence $p(s_{1})=0$, $k(s_{1})>0$ in view of the mean convexity assumption. We then find $\frac{d}{ds} \langle e_{2}, \nu \rangle (s_{1})= + k(s_{1}) >0$. Therefore, $\langle e_{2}, \nu \rangle (s) >0$ in some right-handed neighborhood of $s_{1}$. Let $[s_{1}, s')$ be the largest such neighborhood over which $\langle e_{2}, \nu \rangle (s) >0$. 
We claim in fact that $s'=s_{2}$. Suppose not. We know that $\langle e_{2}, \nu \rangle (s') =0$ and $\frac{d}{ds} \langle e_{2}, \nu \rangle (s') \leq 0$. We consider the cases  $\partial_{s}(s')= \pm e_{2}$ separately, see \ref{curve}.

\begin{figure} 
    \centering
    \begin{tikzpicture}[xscale=1.7,yscale=1.7]
    \begin{scope}[shift={(-5,0)}]
    \begin{axis}[title={\tiny{Case I}}, ylabel=\tiny{$x_{2}$}, xlabel=\tiny{$x_{1}$}, y=0.5cm, x=0.5cm,
          xmax=4.2, xmin=-4.2, ymax=4.1, ymin=0,
          axis lines=middle,
          restrict y to domain=-7:20,
           xtick = {0},
        ytick= {0},
        yticklabels= {0, $u_{\min}(0)$}
          enlargelimits]
        \draw plot[smooth, ultra thick, color=black] coordinates{ (0,1) (1,1.2) (2.5,2.5) (2,3.2) (1,3.5) (0,3) (-0.5,2.8) (-1,3) (-0.5,3.5) (-0.8, 3.8) (-1.5, 4)  (-2.5,3.3) (-2.2,2.3) (-1.5,1.6) (-0.7,1.2) (0,1)};
        \draw (2.5,2.5) node{\Large{$\cdot$}};
        \draw (-2.55,3.25) node{\Large{$\cdot$}};
        \draw (0,1) node[anchor=north]{\tiny{$s=0$}};
        \draw (2.5,2.5) node[anchor=west]{\tiny{$s=s_{1}$}};
        \draw (-2.5,3.3) node[anchor=east]{\tiny{$s=s_{2}$}};
        \draw[color=blue][->] (-1,3)--(-1,3.5);
        \draw (-0.75,3) node[anchor=east]{\textcolor{blue}{\tiny{$\partial_{s}(s')$}}};
        \draw (-2,0.5) node{\tiny{\textcolor{blue}{$H(s') \leq 0$}}};
        \end{axis}
        
    \begin{scope}[shift={(5,0)}]
     \begin{axis}[ title={\tiny{Case II}},ylabel=\tiny{$x_{2}$}, xlabel=\tiny{$x_{1}$}, y=0.5cm, x=0.5cm,
          xmax=4.2, xmin=-4.4, ymax=4.1, ymin=0,
          axis lines=middle,
          restrict y to domain=-7:20,
           xtick = {0},
        ytick= {0},
        yticklabels= {0, $u_{\min}(0)$}
          enlargelimits]
        \draw[fill=gray!30, opacity=0.75] plot[smooth, ultra thick] coordinates{ (0,1) (1,1.2) (2.5,2.5) (2,3.5) (1,4) (0,3.8) (0.3,3.2) (0.7,3) (1,2.8) (0.7,2.5) (0, 2.4) (-1, 2.7)  (-1.6,3.3) (-1.5,3.5) (-1.4,3.7) (-1.7,3.9) (-2.5,3.6) (-2.8,3.2) (-2.3,2) (-1.5,1.5) (0,1)};
        \draw[->][color=red] (1,2.8) -- (0.5, 2.8);
        \draw (2.5,2.5) node{\Large{$\cdot$}};
        \draw (-2.8,3.2) node{\Large{$\cdot$}};
        \draw (0,1) node[anchor=north]{\tiny{$s=0$}};
        \draw (2.5,2.5) node[anchor=west]{\tiny{$s=s_{1}$}};
        \draw[->][color=blue] (0,3.8)--(0,3.3);
        \draw (0.25,3.8) node[anchor=east]{\tiny{\textcolor{blue}{$\partial_{s}(s')$}}};
        \draw[color=red] (1,1.2)--(1,4);
        \draw (0.7,2.8) node[anchor=west]{\tiny{\textcolor{red}{$s=s''$}}};
        \draw (2,0.5) node{\tiny{\textcolor{red}{$H(s'')\leq 0$}}};
        \draw (0,1.7) node{\tiny{$E_{0}$}};
        \draw (-2.7,3.2) node[anchor=east]{\tiny{$s=s_{2}$}};
        \end{axis}
    \end{scope}
    \end{scope}
    \end{tikzpicture}
    \caption{If $\langle e_{2}, \nu \rangle =0$ at some point between $s_{1}$ and $s_{2}$, one can locate a point where $\langle e_{2}, \nu \rangle =0$ and $k \leq 0$.}
    \label{curve}
\end{figure}
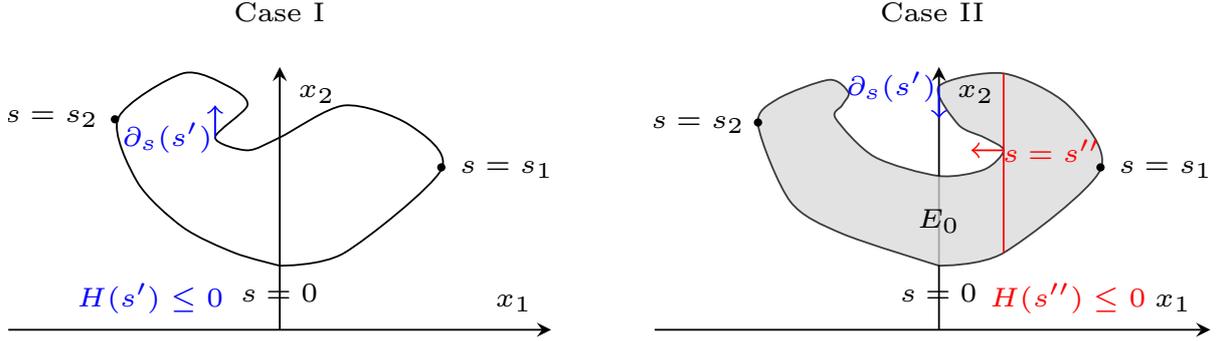

\textit{Case I: $\partial_{s}(s')=+e_{2}$}: \eqref{arc} implies $k(s') \leq 0$ in this case. But $p(s') =0$ here, so we would have altogether that $H(s') \leq 0$, a contradiction.

\textit{Case II: $\partial_{s}(s')=-e_{2}$}: If $\frac{d}{ds} \langle e_{2}, \nu \rangle (s') =0$, we would have $k(s')=0$ so that $H(s')=0$, which is a contradiction. Then assume $\frac{d}{ds} \langle e_{2}, \nu \rangle (s') < 0$. We also have

\begin{eqnarray*}
\frac{d}{ds} \rho_{1} (s') &=& 0, \\
\frac{d}{ds} \rho_{2} (s') &<& 0, \\
\frac{d^{2}}{ds^{2}} \rho_{1} (s') &>& 0.
\end{eqnarray*}
So there is some $\widetilde{s} \in (s', s_{2})$ with $\rho_{1}(\widetilde{s}) > \rho_{1}(s')$, $\rho_{2}(\widetilde{s}) < \rho_{2}(s')$. Now, letting $E_{0}$ be the open subset which $\rho_{0}$ encloses, define for each $s \in (s_{1},s_{2})$ the line $L_{s}$ by

\begin{equation*}
    L_{s} = \{ (x_{1},x_{2}) \in \overline{E}_{0} | x_{1}=\rho_{1}(s) \}.
\end{equation*}
Take the smallest $s$ value for which some interior point of $L_{s}$ intersects $\rho_{0}$, which must exist by the above observation and the fact that $k(s_{2})>0$. At the intersection point $\rho(s'')$ we have $\nu(s'')=-e_{1}$ and $k(s'') \leq 0$, which together imply $H(s'') \leq 0$. Once again by contradiction, we conclude $\langle e_{2}, \nu \rangle >0$ on $(s_{1},s_{2})$.

Now we know that the subsets $\{ \langle e_{2}, \nu \rangle < 0 \}$ and $\{ \langle e_{2}, \nu \rangle \geq 0 \}$ of $\rho_{0}$ are each comprised of a single connected component, and these must each be a graph over the $x_{1}$ coordinate. Note also that $k(s)>0$ wherever $\langle e_{2}, \nu \rangle (s) <0$ by the positive mean curvature assumption.

\end{proof}

We would like to consider the evolution of the torus $N_{0}$ generated by $\rho_{0}$ by \eqref{IMCF}. Since \eqref{IMCF} preserves rotational symmetry, we at least know that we can identify the flow surface $N_{t}$ with the curve $\rho_{t}$ in the $(x_{1},x_{2})$-plane which generates it. One cumbersome aspect of the analysis of IMCF is that, unlike with MCF, $N_{t}$ is not neccessarily embedded even if $N_{0}$ is. Our first task is then to rule out the possibility of self-intersections in the curve $\rho_{t}$. We may accomplish this using the profile for $\rho_{0}$ obtained in the above proposition. Often in the statement of our results, we will identify a solution of \eqref{IMCF} with the flow surfaces $N_{t}=F_{t}(N)$.

\begin{theorem}[Preserving Embeddedness] \thlabel{graphs}
Let $N_{0} \subset \mathbb{R}^{3}$ be a rotationally symmetric  $H>0$ embedded torus, and $\{N_{t}\}_{0 \leq t < T_{\max}}$ the corresponding maximal solution to \eqref{IMCF}. Then $N_{t}$ is embedded for each $t \in [0,T_{\max})$. In particular, each generating curve $\rho_{t}$ is the disjoint union of two graphs for $w_{t}$ and $v_{t}$ satisfying the conditions in \thref{graphs}.
\end{theorem}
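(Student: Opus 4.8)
The plan is to run a continuity argument in time. Set $\mathcal{I}=\{t\in[0,T_{\max}):N_s\text{ is embedded for all }s\in[0,t]\}$; I would show $\mathcal{I}$ is nonempty, relatively open, and relatively closed in $[0,T_{\max})$, so $\mathcal{I}=[0,T_{\max})$, after which the last sentence of the theorem is exactly the preceding proposition applied at each fixed $t$. Nonemptiness is the hypothesis on $N_0$, and openness is immediate: $F$ is smooth in $t$ and embeddedness is an open condition on immersions of the compact manifold $\mathbb{T}^2$. First I would record the standing facts valid for $t<T_{\max}$: the flow is smooth; $H>0$ on $N_t$ (otherwise $\nu/H$ is undefined and the solution could not be continued, so $H>0$ propagates from $H_0>0$); rotational symmetry is preserved by equivariance of \eqref{IMCF}; and the generating curve $\rho_t$ lies in the open half-plane $\{x_2>0\}$, since if $\rho_t$ met the axis then $N_t$ would fail to be a smooth torus, contradicting $t<T_{\max}$.

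For closedness, suppose $t^\ast:=\sup\{t:[0,t)\subseteq\mathcal{I}\}<T_{\max}$ while $N_{t^\ast}$ is not embedded. For each $t<t^\ast$ the preceding proposition writes $\rho_t=\mathrm{graph}(w_t)\sqcup\mathrm{graph}(v_t)$ over $[a_t,b_t]$ with $w_t<v_t$, $\mathrm{graph}(w_t)$ convex, $\langle\nu,e_2\rangle<0$ on $\mathrm{graph}(w_t)$, and $\langle\nu,e_2\rangle>0$ on $\mathrm{Int}(\mathrm{graph}(v_t))$. Since $t^\ast<T_{\max}$ the curves converge smoothly, $\rho_t\to\rho_{t^\ast}$. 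The left- and rightmost points of $\rho_t$ move outward at speed $1/H>0$, so $b_t-a_t$ is nondecreasing and $a_{t^\ast}\le a_0<b_0\le b_{t^\ast}$; passing to the limit, $\rho_{t^\ast}$ is still a union of two (weak) graphs $w^\ast\le v^\ast$ over $[a_{t^\ast},b_{t^\ast}]$, so any self-intersection of $\rho_{t^\ast}$ is an isolated point where $w^\ast$ meets $v^\ast$. After disposing of the degenerate possibility that this happens at an endpoint, we obtain an interior point $x_0$ with $h:=w^\ast(x_0)=v^\ast(x_0)>0$; since $\rho_{t^\ast}$ is smooth and $x_0$ is a minimum of $v^\ast-w^\ast$, the lower and upper arcs are tangent there, with common finite slope $m$, opposite unit normals $\nu_{w}(x_0)=-\nu_{v}(x_0)$, and $w^{\ast\prime\prime}(x_0)\le v^{\ast\prime\prime}(x_0)$.

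The contradiction comes from the decomposition $H=k+p$ evaluated on the two sheets of $N_{t^\ast}$ lying over $x_0$. Both sheets have height $u(x_0)=h$ and $\langle\nu_{w},e_2\rangle(x_0)=-\langle\nu_{v},e_2\rangle(x_0)$, so
\begin{equation*}
p_w(x_0)+p_v(x_0)=\frac{\langle\nu_{w},e_2\rangle(x_0)+\langle\nu_{v},e_2\rangle(x_0)}{h}=0,
\end{equation*}
and, with the curvature signs fixed by convexity of the lower graph, $k_w(x_0)+k_v(x_0)=\big(w^{\ast\prime\prime}(x_0)-v^{\ast\prime\prime}(x_0)\big)(1+m^2)^{-3/2}\le 0$. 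Adding, the sum of the mean curvatures of $N_{t^\ast}$ at the two sheets over $x_0$ is nonpositive, contradicting $H>0$ on $N_{t^\ast}$. Hence $N_{t^\ast}$ is embedded, $t^\ast\in\mathcal{I}$, and $\mathcal{I}=[0,T_{\max})$.

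I expect the main obstacle to be not this short computation but the bookkeeping behind it: confirming that the smooth limit $\rho_{t^\ast}$ genuinely presents its loss of embeddedness as a tangency of the lower and upper arcs at an interior, finite-slope point. Concretely, one must rule out a cusp or pinch at $x=a_{t^\ast}$ or $x=b_{t^\ast}$, and rule out a vertical tangent developing at an interior point of $\mathrm{graph}(w_t)$ as $t\to t^\ast$, where the bottom part could fold over and the limit cease to be a graph over $x_1$. The first I would handle using the uniform outward motion near the extreme points together with the smoothness of $N_{t^\ast}$ (no corners); the second by re-running the $H=k+p$ argument with the coordinate roles interchanged, since in every configuration the two tangent sheets and the pinched region squeezed between them force the sum of the two mean curvatures at the contact point to be nonpositive.
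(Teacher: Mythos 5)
Your strategy is genuinely different from the paper's, and the core contradiction you propose is a nice one. The paper does not argue via a tangency of two sheets at all: it invokes Theorem 4 of \cite{me} to get the nesting $E_{t_1} \subset E_{t_2}$ of the enclosed regions, deduces that $w_t$ is pointwise decreasing and $v_t$ pointwise increasing in $t$, uses Dini's theorem to upgrade to $C^0_{\mathrm{loc}}$ convergence, and then obtains the \emph{strict} separation $w_{t_0}(x) \leq w_t(x) < v_t(x) \leq v_{t_0}(x)$ directly from monotonicity, with no case analysis at a contact point. Your computation $p_w + p_v = 0$, $k_w + k_v \leq 0$ at a tangency is correct, shorter, and avoids citing the nesting theorem; but the paper's monotonicity route never has to reason about the local geometry of a touching point at all.

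The genuine gap is precisely the ``bookkeeping'' you flag at the end and defer: the claim that $\rho_{t^\ast}$ is still a disjoint union of two weak graphs over $x_1$, so that any self-intersection is a lower-meets-upper tangency with finite common slope. Your proposed fix for an interior vertical tangent --- ``re-running the $H=k+p$ argument with the coordinate roles interchanged, since \ldots the two tangent sheets \ldots'' --- does not apply: a vertical tangent at an interior point of $\mathrm{graph}(w^\ast)$ is a \emph{single} sheet, not a pair of tangent sheets, and there is nothing to sum. The paper closes exactly this hole with a separate, pointwise argument that you do not reproduce: since $\langle e_2,\nu\rangle < 0$ on each $\mathrm{graph}(w_t)$, the limit satisfies $\langle e_2,\nu\rangle \leq 0$ on $\mathrm{graph}(w_{t_0})$, so an interior zero $s_0$ is a maximum of $\langle e_2,\nu\rangle$ along that arc; by \eqref{arc} this forces $k(s_0)=\frac{d}{ds}\langle e_2,\nu\rangle(s_0)=0$, while $p(s_0)=\langle e_2,\nu\rangle(s_0)/u=0$, giving $H(s_0)=0$ on the smooth curve $\rho_{t_0}$, a contradiction. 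That is the missing step that rules out a vertical tangent and legitimizes the graph decomposition at $t^\ast$; without it (or the paper's monotonicity/Dini alternative), the reduction to your clean tangency calculation is not justified.
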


\begin{proof}
$N_{t}$ always remains embedded for short time, so we demonstrate that if $t \nearrow t_{0}$ and $N_{t}$ is embedded for each $t < t_{0}$ then $N_{t_{0}}$ is necessarily embedded. Each generating curve $\rho_{t}$ for $N_{t}$ is the disjoint union of two graphs $w_{t}$ and $v_{t}$ with $w_{t} < v_{t}$ over the axis of rotation, so we will show this is also true for $\rho_{t_{0}}$. Let $E_{t}$ be the region enclosed by $\rho_{t}$. According to Theorem 4 in \cite{me}, we must have $E_{t_{1}} \subset E_{t_{2}}$ for $t_{1} < t_{2}$ in $[0,t_{0})$. This implies that $w_{t_{2}}(x) < w_{t_{1}}(x)$ and $v_{t_{2}}(x) > v_{t_{1}}(x)$ for any $x \in (a_{t_{1}},b_{t_{1}})$. Then for any compact set $K \subset (a_{t_{0}},b_{t_{0}})$ and $\widetilde{t} \in [0,t_{0})$ so that $K \subset (a_{\widetilde{t}},b_{\widetilde{t}})$, we have that $w_{t}|_{K}$ and $v_{t}|_{K}$ are each monotone and bounded over $t \in (\widetilde{t},t)$. The graphs of the limits $v_{t_{0}}|_{K}$ and $w_{t_{0}}|_{K}$ must also parametrize part of $N_{t_{0}}$ by uniqueness of limits and are therefore continuous. Then by Dini's Theorem $w_{t} \rightarrow w_{t_{0}}$ and $v_{t} \rightarrow v_{t_{0}}$ in $C^{0}_{\text{loc}}((a_{t_{0}},b_{t_{0}}))$ as $t \rightarrow t_{0}$. Note that $v_{t_{0}}(a_{t_{0}})= w_{t_{0}}(a_{t_{0}})$ since $w(a_{t})=v(a_{t})$, and likewise for $b_{t_{0}}$, so the union of  $\overline{\text{graph}(v_{t_{0}})}$ and $\text{graph}(w_{t_{0}})$ forms a closed curve. By uniqueness of limits, this union must equal $\rho_{t_{0}}$.

Now, we must have $\langle e_{2}, \nu \rangle \neq 0$ over the graphs of $w_{t_{0}}$ and $v_{t_{0}}$, since $\langle e_{2}, \nu \rangle \leq 0$ over $w_{t_{0}}$ (Resp. $\geq 0$ over $v_{t_{0}}$), and therefore if $\langle e_{2}, \nu \rangle (s_{0}) =0$ anywhere on $\text{graph}(w_{t_{0}})$ we would have by \eqref{arc} that




\begin{equation*}
    k(s_{0})= \frac{d}{ds} \langle e_{2}, \nu \rangle (s_{0}) = 0.
\end{equation*}
This would leave us with $H(s_{0})=0$ at this point, but we know $H(s) >0$ over $\rho_{t_{0}}$. The same argument yields that $\langle e_{2}, \nu \rangle >0$ over the graph of $v_{t_{0}}$. By the monotonicty of $w_{t}$ and $v_{t}$ noted above, we also have for any $x \in (a_{t_{0}}, b_{t_{0}})$ and $t$ sufficiently close to $t_{0}$ that

\begin{equation*}
    w_{t_{0}}(x) \leq w_{t}(x) < v_{t}(x) \leq v_{t_{0}}(x).
\end{equation*}
Hence these graphs do not intersect over $(a_{t_{0}}, b_{t_{0}})$. Since the graphs are disjoint, $\rho_{t_{0}}$ must be embedded.
\end{proof}

As mentioned in the above proof, embeddedness implies that $N_{t_{2}}$ must enclose $N_{t_{1}}$ whenever $t_{2}>t_{1}$ by Theorem 4 in \cite{me}. This, along with the convexity of the bottom graph of $\rho_{t}$, will be crucial for ruling out the possibility that $\lim_{t \rightarrow T} u_{\min}(t)=0$.

\section{An Energy Estimate on Gauss Curvature}

We now know by embeddedness and the topology of $N_{0}$ that $N_{t}$ must become singular at some time $T_{\max}$ which occurs within a prescribed time interval, see Corollary 2 in \cite{me}. In order to rule out the possibility of pinching, we first estimate the integral of the Gauss curvature around the inner ring of the torus closest to the axis of rotation. To this end, we first establish that $\langle e_{2}, \nu \rangle$ is bounded away from $0$ over some uniform neighborhood of the corresponding point on $\rho_{t}$. This neighborhood corresponds in $\mathbb{R}^{3}$ to the region of $N_{t}$ between two fixed parallel planes each perpendicular to the axis of rotation.

\begin{figure}[b!]
\centering
\begin{tikzpicture}[xscale=2,yscale=2]
\begin{axis}[
    y=0.5cm, x=0.5cm,
          xmax=4.4, xmin=-4.4, ymax=6, ymin=0,
          axis lines=middle,
          restrict y to domain=-7:20,
           xtick = {0},
        ytick= {0},
        yticklabels= {0, $u_{\min}(0)$}
          enlargelimits]
        \draw plot[smooth] coordinates{ (0,1) (1,1.5) (2.2,3) (2,4) (1,4.6) (0,5) (-0.5,4.6) (-1,4) (-2, 4.3)  (-2.5,3.8) (-2.6,3.3) (-2,2) (-0.7,1.2) (0,1)};
    
         \draw [color=blue, decorate,decoration={brace,amplitude=10pt}]
(0.75,1.2) -- (-0.75,1.2);
        \draw (0,0.3) node{\textcolor{blue}{\tiny{$S_{t}$}}};
        \draw[fill=gray!30,opacity=0.5] plot[smooth, ultra thick, color=black] coordinates{ (0,1.5) (0.5,1.7) (1,2.1) (1.3,2.6) (1.5,3.3) (1,3.7) (0.5,3.5) (0,3.7) (-0.5,3.6) (-1.2,3.3) (-1.7,2.8) (-1.1,2.1) (-0.5,1.7) (0,1.5)};
        \draw[dashed] (0.75,0) -- (0.75,6);
        \draw[dashed] (-0.75,0) -- (-0.75,6);
        \draw (0.75,5) node[anchor=west]{\tiny{$x_{1}=a$}};
        \draw (-0.75,5) node[anchor=east]{\tiny{$x_{1}=-a$}};
        \draw (0,2.5) node{\tiny{$E_{t_{0}}$}};
        \draw (2,3) node[anchor=west]{\tiny{$\rho_{t},t>t_{0}$}};
        \draw[color=red] (0,0.85)--(3,2.69);
        \end{axis}
\end{tikzpicture}
\caption{In order for $\rho_{t}$ to properly enclose $E_{t_{0}}$ for $t>t_{0}$, the tangent line must have a uniformly small slope on $\partial S_{t}$.}
\end{figure}
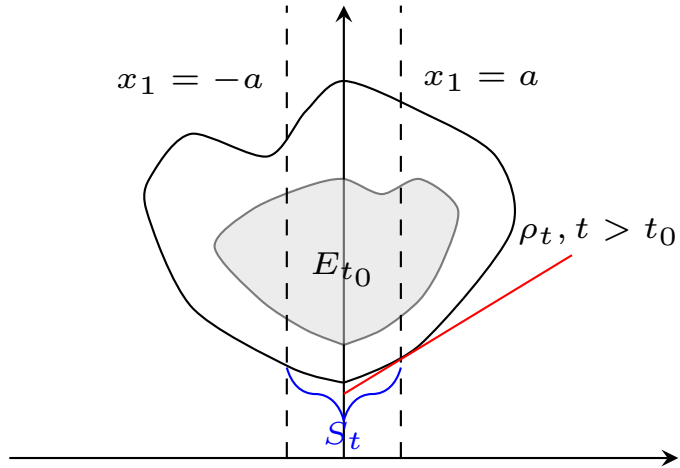

\begin{proposition} \thlabel{slope}
For a sequence of times $t_{n} \rightarrow T_{\max}$ and corresponding points $x_{n} \in \rho_{t_{n}}$ which minimize the height $u$, W.L.O.G. choose the $x_{2}$ axis so that $0=\lim_{n} \langle e_{1}, x_{n} \rangle$. Then there exists a constant $a>0$ such that the sets

\begin{equation*}
   S_{t}= \{ x \in \rho_{t} | \langle e_{2}, \nu \rangle (x) <0 \text{ \upshape{and} } |\langle e_{1}, \nu \rangle| (x) < a \}
\end{equation*}
are each graphs over $(-a,a)$ with $\langle e_{2}, \nu \rangle|_{\partial S_{t}} \leq c$ for some $c=c(N_{0})<0$. 
\end{proposition}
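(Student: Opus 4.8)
The plan is to pass from the torus to its generating curve $\rho_t$ and work entirely with the lower graph $w_t$ supplied by the preceding theorem. Parametrizing $\mathrm{graph}(w_t)$ by the $x_1$-coordinate, the outward normal is $\nu=(w_t',-1)/\sqrt{1+(w_t')^2}$, so $\langle e_2,\nu\rangle=-(1+(w_t')^2)^{-1/2}$ is automatically negative there and the defining condition $|\langle e_1,\nu\rangle|<a$ becomes $|w_t'|<a/\sqrt{1-a^2}$. Thus the statement reduces to exhibiting a single $a\in(0,1)$ and a constant $c<0$ so that, for $t$ close enough to $T_{\max}$, (i) the lower graph is defined over $(-a,a)$, i.e.\ $(-a,a)\subset(a_t,b_t)$, and (ii) $|w_t'|$ on $[-a,a]$ is bounded by a constant $M=M(N_0)$: for then $S_t$ agrees over $(-a,a)$ with $\mathrm{graph}(w_t|_{(-a,a)})$, and $\langle e_2,\nu\rangle\le-(1+M^2)^{-1/2}=:c<0$ on $[-a,a]$, in particular on $\partial S_t$.

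For (i) I would fix $t_0$ from the given sequence, far enough along that $\rho_{t_0}$'s lowest point $\xi_{t_0}$ is close to $0$; since an arc-length parametrized smooth curve cannot minimize both of its coordinates at one point, $\xi_t$ always lies in the interior of $(a_t,b_t)$, so for $a\in(0,1)$ small we get $(-a,a)\subset(a_{t_0},b_{t_0})$, and then the monotonicity $E_{t_0}\subset E_t$ for $t\ge t_0$ (preservation of embeddedness, together with Theorem 4 of \cite{me}) forces $(a_t,b_t)\supset(a_{t_0},b_{t_0})$ for all such $t$. For (ii) — the crux — I would exploit convexity together with this same nesting. Mean convexity plus $\langle e_2,\nu\rangle<0$ on the lower graph give $k=H-\langle e_2,\nu\rangle u^{-1}>H>0$ there, so $w_t$ is strictly convex and $w_t'$ nondecreasing. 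Because $\overline{E_{t_0}}\subset E_t$ for $t>t_0$, the rightmost and leftmost points $(b_{t_0},y_{t_0})$ and $(a_{t_0},z_{t_0})$ of the \emph{fixed} curve $\rho_{t_0}$ lie inside $E_t$, so $w_t(b_{t_0})<y_{t_0}$ and $w_t(a_{t_0})<z_{t_0}$. Since $\mathrm{graph}(w_t)$ lies above its tangent line at $x_1=a$, evaluating that line at $x_1=b_{t_0}$ and using $w_t>0$ gives
\[
w_t'(a)\le\frac{w_t(b_{t_0})-w_t(a)}{b_{t_0}-a}\le\frac{y_{t_0}}{b_{t_0}-a},
\]
and evaluating it at $x_1=a_{t_0}$ gives the matching lower bound $w_t'(a)\ge -z_{t_0}/(a-a_{t_0})$; the symmetric estimates hold at $x_1=-a$. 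Because $w_t'$ is nondecreasing, an upper bound at $x_1=a$ and a lower bound at $x_1=-a$ sandwich $w_t'$ on all of $[-a,a]$, yielding $M=M(N_0)$ (depending on $N_{t_0}$, hence on $N_0$, and on $a$). This is exactly the ``tangent line has uniformly small slope on $\partial S_t$'' mechanism: were the lower graph too steep at $x_1=\pm a$, convexity would push $\rho_t$ out past the lateral extent of the trapped region $E_{t_0}$.

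With (i) and (ii) established one simply picks $a$ small enough for (i) and sets $c=-(1+M^2)^{-1/2}$. The step I expect to demand the most care is ruling out degeneration of the neighborhood as $t\to T_{\max}$ — that $0$ genuinely remains in the interior of $(a_t,b_t)$ uniformly — which is where the monotone growth of the $E_t$ and the normalization $0=\lim_n\langle e_1,x_n\rangle$ are essential; and, relatedly, verifying that one fixed $a$ can simultaneously meet the smallness requirement of (i) and keep the enclosure estimate in (ii) meaningful, i.e.\ $a<\min(b_{t_0},-a_{t_0})$. Everything else is the elementary convexity-versus-enclosure comparison above.
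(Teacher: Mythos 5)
Your proposal is correct and proceeds by essentially the same mechanism as the paper: fix a $t_{0}$ with $a_{t_{0}}<0<b_{t_{0}}$ so that a uniform interval $(-a,a)$ sits inside every later domain $(a_{t},b_{t})$, and then use convexity of the lower graph together with the enclosure monotonicity $E_{t_{0}}\subset E_{t}$ (Theorem 4 of \cite{me}) to bound $|w_{t}'|$ on $[-a,a]$, from which the uniform bound on $\langle e_{2},\nu\rangle$ follows via $-\langle e_{2},\nu\rangle=(1+|w_{t}'|^{2})^{-1/2}$. The only cosmetic difference is that you quantify the slope bound by evaluating the supporting tangent lines at the extreme abscissas $a_{t_{0}},b_{t_{0}}$ of the fixed curve $\rho_{t_{0}}$, whereas the paper phrases the same comparison as ``a too-steep tangent line would pass through $E_{t_{0}}$, which the convex curve $\rho_{t}$ must lie to one side of''; both are the same convexity-versus-enclosure argument.
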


\begin{proof}
Take a $t_{0}$ sufficiently close to $T_{\max}$ so that $\min_{x \in \rho_{t_{0}}} \langle x, e_{1} \rangle <0$, $\max_{x \in \rho_{t_{0}}} \langle x, e_{1} \rangle >0$. Now define $a=\frac{1}{2} \min \{-\min_{x \in \rho_{{t}_{0}}} \langle x, e_{1} \rangle, \max_{x \in \rho_{t_{0}}} \langle x, e_{1} \rangle \}$. 

For $t> t_{0}$ sufficiently small, $\overline{S}_{t}$ may be parametrized by a convex graph $w_{t}: [-a,a] \rightarrow \mathbb{R}$ over the $x_{1}$ axis. We claim that $|w'_{t}(a)| \leq \frac{b}{a}$, where $b=\max_{x \in \rho_{t_{0}}} u(x)$. Suppose not: then the tangent line $L$ to $\rho_{t}$ at the point $(a,w_{t}(a))$ must pass through the region $E_{t_{0}}$ enclosed by the curve $\rho_{t_{0}}$, see Figure 2. By \thref{graphs}, $\rho_{t}$ is the union of disjoint graphs with the lower graph convex. This means that $\rho_{t}$ lies entirely on one side of $L$, so that $\rho_{t}$ also intersects $E_{t_{0}}$. According to Theorem 4 in \cite{me}, this cannot happen since the $\rho_{t}$ are embedded and hence must enclose $\overline{E}_{t_{0}}$ for $t>t_{0}$.

Thus, we have $|w'_{t}(a)| \leq \frac{b}{a}$ for $t > t_{0}$ sufficiently small. This also gives a time-independent bound on $w'_{t}$ over the entire domain by convexity, so $\overline{S_{t}}$ remains a graph over $[-a,a]$ with this uniform slope estimate on its boundary for each $t \in (t_{0},T_{\max})$. The result then follows from the relation $-\langle e_{2}, \nu \rangle = \frac{1}{(1+ |w'_{t}|^{2})^{\frac{1}{2}}}$, see, e.g. \cite{huisken1990} or \cite{ecker2}.
\end{proof}

Now, an application of Gauss-Bonnet gives a control on the $L^{1}$ norm of the Gauss curvature $K$ over the region of $N_{t}$ generated by $S_{t}$.

\begin{corollary}[Gauss Curvature Estimate] \thlabel{gauss}
Let $S_{t}$ be as above, and $S'_{t}$ the surface generated by revolving $S_{t}$ about the $x_{1}$ axis. Then for some constant $\epsilon=\epsilon(N_{0}) > 0$ we have

\begin{equation} \label{est}
    \int_{S'_{t}} |K| d \mu \leq 4\pi(1-\epsilon) 
\end{equation}
for each $t \in [0,T_{\max})$.
\end{corollary}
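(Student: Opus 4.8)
The plan is to apply the Gauss--Bonnet theorem to the annular surface $S'_{t}$ and to estimate the resulting boundary integral using the slope bound of \thref{slope}. First I would record that $K \leq 0$ everywhere on $S'_{t}$: along the ring generated by a point $\rho_{t}(s)$ the principal curvatures of $N_{t}$ are $k(s)$, the curvature of $\rho_{t}$ in the plane, and $p(s) = \langle \nu, e_{2}\rangle\, u^{-1}(s)$ as in Section 2, and on $S_{t}$ one has $\langle e_{2},\nu\rangle < 0$, hence $p < 0$, while $k > 0$ there by the mean convexity assumption (cf.\ \thref{graphs}); thus $K = kp < 0$ and $|K| = -K$ on $S'_{t}$.

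Next, since $S_{t}$ is a convex graph over $(-a,a)$ lying at positive distance from the $x_{1}$-axis, $S'_{t}$ is a compact embedded annulus whose boundary is the pair of latitude circles $C^{\pm}_{t}$ over $x_{1} = \pm a$, and $\chi(S'_{t}) = 0$. Gauss--Bonnet then gives $\int_{S'_{t}} K\, d\mu + \int_{C^{+}_{t}}\kappa_{g}\,d\sigma + \int_{C^{-}_{t}}\kappa_{g}\,d\sigma = 0$, so that
\[
  \int_{S'_{t}} |K|\, d\mu \;=\; -\int_{S'_{t}} K\, d\mu \;\leq\; \int_{C^{+}_{t}} |\kappa_{g}|\, d\sigma + \int_{C^{-}_{t}}|\kappa_{g}|\,d\sigma ,
\]
and passing to absolute values here removes any need to track orientations. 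Parametrizing $S'_{t}$ by $(s,\theta)\mapsto(x_{1}(s), u(s)\cos\theta, u(s)\sin\theta)$ with $s$ arclength along $\rho_{t}$, the induced metric is the warped product $ds^{2} + u(s)^{2}\, d\theta^{2}$, and a standard computation shows the parallel $\{s = s_{0}\}$ has geodesic curvature of constant magnitude $|u'(s_{0})|/u(s_{0})$; since this circle has length $2\pi u(s_{0})$, we obtain $\int_{\{s=s_{0}\}}|\kappa_{g}|\, d\sigma = 2\pi |u'(s_{0})|$.

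It remains to bound $|u'|$ on $\partial S_{t}$. Because $\nu$ is a unit vector in the plane, $\langle e_{1},\nu\rangle^{2} + \langle e_{2},\nu\rangle^{2} = 1$, and because $\rho'_{t} = (x_{1}', u')$ is orthogonal to $\nu$ we have $|u'(s)| = |\langle e_{1},\nu\rangle(s)|$. By \thref{slope}, $\langle e_{2},\nu\rangle \leq c < 0$ on $\partial S_{t}$ for a constant $c = c(N_{0})$, whence $|u'|^{2} = 1 - \langle e_{2},\nu\rangle^{2} \leq 1 - c^{2}$ there. Combining the three displays,
\[
  \int_{S'_{t}} |K|\, d\mu \;\leq\; 2\pi\, |u'|\big|_{C^{+}_{t}} + 2\pi\, |u'|\big|_{C^{-}_{t}} \;\leq\; 4\pi\sqrt{1-c^{2}},
\]
and since $c \in (-1,0)$ the constant $\epsilon := 1 - \sqrt{1-c^{2}}$ is positive and depends only on $N_{0}$, which is \eqref{est}.

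The only genuine input beyond Gauss--Bonnet is the \emph{uniform in $t$} lower bound $|\langle e_{2},\nu\rangle| \geq |c| > 0$ on $\partial S_{t}$, which is exactly \thref{slope}; the only mild technical points are the geodesic-curvature identity for a circle of latitude and the verification that $S'_{t}$ is genuinely an embedded annulus, both of which are routine. I therefore do not expect a real obstacle here once \thref{slope} is available; the content of the corollary is essentially the observation that the defect from $4\pi$ in the naive Gauss--Bonnet bound is controlled by the non-vanishing of $\langle e_{2},\nu\rangle$ on the two bounding latitude circles.
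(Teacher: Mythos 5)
Your proof is correct and follows essentially the same route as the paper: apply Gauss--Bonnet to the annulus $S'_{t}$ (with $\chi = 0$), reduce $\int_{S'_t}|K|\,d\mu$ to the boundary geodesic-curvature integral over the two latitude circles, compute that each such integral equals $2\pi|\langle e_1,\nu\rangle|$ at the corresponding endpoint, and then invoke \thref{slope} and $|\langle e_1,\nu\rangle|^2 = 1 - |\langle e_2,\nu\rangle|^2$ to make the bound strictly less than $4\pi$. The only differences are cosmetic: you spell out the sign of $K$, the warped-product computation of $\kappa_g$, the identity $|u'| = |\langle e_1,\nu\rangle|$, and the explicit value $\epsilon = 1 - \sqrt{1-c^2}$, all of which the paper leaves implicit.
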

\begin{proof}
$S'_{t}$ is an embedded compact surface with boundary $\partial S'_{t}$ and an Euler Characteristic of $0$ in $\mathbb{R}^{3}$, so Gauss-Bonnet tells us that

\begin{equation*}
    \int_{S'_{t}} K d \mu = -\int_{\partial S'_{t}} k_{g} ds,
\end{equation*}
where $k_{g}$ is the geodesic curvature of $\partial S'_{t}$. Here, $\partial S'_{t}$ consists of two circles $C_{1}$ and $C_{2}$, call their radii $r_{1}$ and $r_{2}$ respectively. The geodesic curvatures over these circles are $k_{g_{1}}=-\frac{\langle e_{1}, \nu \rangle (x_{0})} {r_{1}}$ and $k_{g_{2}}=\frac{\langle e_{1}, \nu \rangle (y_{0})} {r_{2}}$, where $x_{0}$ and $y_{0}$ are the left and right endpoints of the curve $S_{t}$, respectively.

By \thref{slope} and the relation $|\langle e_{1}, \nu \rangle|^{2}= 1- |\langle e_{2}, \nu \rangle|^{2}$, we have $|\langle e_{1}, \nu \rangle| \leq 1-\epsilon$ for some uniform constant $
\epsilon >0$. Then

\begin{equation*}
    \int_{C} k_{g} ds = \int_{C_{1}} k_{g_{1}} ds + \int_{C_{2}} k_{g_{2}} ds \leq 4 \pi (1-\epsilon).
\end{equation*}

Noting that $K$ is strictly negative over $S'_{t}$, the result follows.
\end{proof}

Intuitively, this estimate is a promising sign that $|K|$, and hence $|A|$, remains uniformly bounded in $L^{\infty}$ norm near $T_{\max}$. We formally prove this in the following section, utilizing both the scale invariance of $\int_{S'_{t}} |K| d\mu$ as well as the rigidity for rotationally symmetric, complete minimal surfaces in $\mathbb{R}^{3}$.
\section{Rescaling the Singularity}
In this section, we derive a contradiction if $\lim_{t \rightarrow T_{\max}} u_{\min}(t)=0$ on the evolving curve $\rho_{t}$ to prove that $N_{t}$ converges to a smooth limit surface. 

Many of the tools developed to analyze the singularities of mean curvature flow do not translate to this setting. For example, the idea of a tangent flow is not applicable here since \eqref{IMCF} does not obey the standard parabolic scaling. Nevertheless, assuming $\lim_{t \rightarrow T} u_{\min}(t)=0$ in our setting, we may consider the re-scaled surfaces

\begin{equation}
    \widetilde{N}_{t}=\frac{1}{u_{\min}(t)}N_{t}.
\end{equation}
Crucially, mean curvature remains uniformly bounded above on $N_{t}$ by initial data under \eqref{IMCF} (see Section 6), which would imply $H \rightarrow 0$ uniformly over $\widetilde{N}_{t}$. Thus, any limit surface which the $\widetilde{N}_{t}$ (or some subset of each) converges to will necessarily be minimal. In the setting of a rotationally symmetric torus $N_{0}$, the candidate limit is rigid: we should expect convergence to a catenoid.

\begin{lemma} \thlabel{cat}
Let $S_{t} \subset \rho_{t}$ be as in \thref{slope}, and consider the rescalings $\widetilde{S}_{t}=\frac{1}{u_{\min}(t)} S_{t}$ with respect to the origin. Suppose $\lim_{t \rightarrow T} u_{\min}(t)=0$. Then the corresponding graphs $\widetilde{w}_{t}$ of $\widetilde{S}_{t}$ converge in $C^{2}_{loc}(\mathbb{R})$ to the function $\widetilde{w}(x)=\cosh(x)$ as $t \rightarrow T_{\max}$.
\end{lemma}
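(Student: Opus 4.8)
The plan is the standard rescaled-limit argument: produce uniform $C^{1,1}$ estimates on the rescaled generating graphs, extract a limit by Arzelà--Ascoli, recognize the limit as a solution of the minimal-surface-of-revolution ODE, and solve that ODE. \textbf{Step 1 (scaling of the curvatures).} Under the homothety $x\mapsto u_{\min}(t)^{-1}x$ mean curvature scales as $\widetilde H_t = u_{\min}(t)\,H_t$, so the bound $\sup_{N_t}H\le\sup_{N_0}H$ from Section 6 together with $u_{\min}(t)\to 0$ gives $\sup_{\widetilde S_t}\widetilde H_t\to 0$. Writing $\widetilde k_t$ for the curvature of the rescaled graph $\widetilde w_t$ and using $H=k+p$ with $p=\langle\nu,e_2\rangle u^{-1}$, on $\widetilde S_t$ (where $\langle e_2,\nu\rangle<0$) one has $\widetilde k_t = \widetilde H_t + (-\langle e_2,\nu\rangle)\,\widetilde u_t^{-1}$; since $-\langle e_2,\nu\rangle\le 1$ and $\widetilde u_t\ge \widetilde u_{\min}(t)=1$, this yields $0\le\widetilde k_t\le \widetilde H_t+1\le 2$ for $t$ near $T_{\max}$.

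\textbf{Step 2 (uniform $C^{1,1}$ estimates and compactness).} By \thref{graphs} each $\widetilde w_t$ is convex, so $\widetilde w_t''\ge 0$; slopes are scale invariant, so \thref{slope} bounds $|\widetilde w_t'|\le \sigma$ for a constant $\sigma=\sigma(N_0)$ on the whole domain of $\widetilde w_t$; and Step 1 gives $\widetilde w_t''=\widetilde k_t(1+(\widetilde w_t')^2)^{3/2}\le 2(1+\sigma^2)^{3/2}$. Thus $\{\widetilde w_t\}$ is bounded in $C^{1,1}$ uniformly in $t$, and since $\widetilde u_{\min}(t)\equiv 1$ the graphs are anchored in $C^0$ at their (rescaled) minimizing point. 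Translating that point to the origin, Arzelà--Ascoli produces times $t_n\nearrow T_{\max}$ along which $\widetilde w_{t_n}\to\widetilde w$ in $C^1_{\mathrm{loc}}$, with $\widetilde w$ convex, $\widetilde w\in C^{1,1}$, $\widetilde w(0)=1$, $\widetilde w'(0)=0$.

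\textbf{Step 3 (the limit equation and conclusion).} Rewriting Step 1, $\widetilde w_t''=\widetilde H_t(1+(\widetilde w_t')^2)^{3/2}+(1+(\widetilde w_t')^2)\,\widetilde w_t^{-1}$. Letting $t_n\to T_{\max}$ and using $\widetilde H_{t_n}\to 0$ with the uniform bounds of Step 2, the right-hand side converges in $C^0_{\mathrm{loc}}$ to $(1+(\widetilde w')^2)\,\widetilde w^{-1}$, hence $\widetilde w_{t_n}''\to(1+(\widetilde w')^2)\widetilde w^{-1}$ in $C^0_{\mathrm{loc}}$, the convergence upgrades to $C^2_{\mathrm{loc}}$, and $\widetilde w$ solves $\widetilde w\,\widetilde w''=1+(\widetilde w')^2$, the equation of a minimal surface of revolution about the axis. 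Bootstrapping the equation makes $\widetilde w$ smooth, and the Cauchy problem with $\widetilde w(0)=1$, $\widetilde w'(0)=0$ has the unique solution $\cosh x$; since every subsequence has a further subsequence converging to this same limit, the full family satisfies $\widetilde w_t\to\cosh$ in $C^2_{\mathrm{loc}}$.

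\textbf{Main obstacle.} The heart of the argument is the uniform curvature estimate in Step 2 and extracting a non-degenerate limit. The key point is that the dangerous term $(-\langle e_2,\nu\rangle)\widetilde u_t^{-1}$ in $\widetilde k_t$ is controlled precisely because $\widetilde u_t\ge\widetilde u_{\min}(t)=1$ — large rescaled heights help rather than hurt — so one genuinely needs the interplay of the three a priori facts: convexity of the lower graph, the scale-invariant slope bound of \thref{slope} (which also prevents the limit from becoming vertical), the homothetic behavior of $H$ combined with the Section 6 bound (which forces minimality of the limit), and the normalization $\widetilde u_{\min}\equiv 1$ (which prevents flattening to the axis). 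One must also ensure the minimizing points of \thref{slope} do not drift in the rescaled picture so that the limit is centred correctly; the scale-invariant bound \thref{gauss} remains available should any residual degeneration of the limit need to be excluded. Once the uniform $C^{1,1}$ bound is in place, identifying and solving the limit ODE is routine.
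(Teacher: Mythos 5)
Your proof is correct and follows essentially the same route as the paper: uniform rescaled slope bound from \thref{slope}, the scaling $\widetilde H_t = u_{\min}(t)H_t\to 0$ from the maximum-principle bound on $H$, a uniform bound on $\widetilde w_t''$ via the graph expression for mean curvature, Arzel\`a--Ascoli, and identification of the limit ODE $\widetilde w\,\widetilde w''=1+(\widetilde w')^2$ whose solution is a catenary. The only cosmetic differences are that you upgrade to $C^2_{\mathrm{loc}}$ by observing the right-hand side of the rearranged ODE converges in $C^0_{\mathrm{loc}}$ (rather than arguing equicontinuity of $\widetilde w_t''$ term by term as the paper does), and you pin down $\gamma=1$ via uniqueness of the Cauchy problem with data $\widetilde w(0)=1$, $\widetilde w'(0)=0$ after recentering, whereas the paper invokes the rigidity of complete minimal surfaces of revolution and notes the precise value of $\gamma$ is immaterial for the ensuing contradiction.
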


\begin{proof}
The parabolic maximum principle guarantees for any solution $\{ N_{t} \}_{0 \leq t< T_{\max}}$ to \eqref{IMCF} that $\max_{N_{t}} H \leq \max_{N_{0}} H$ (we give the relevant evolution equation in the proof of \thref{wilmore}). Therefore, assuming $\lim_{t \rightarrow T} u_{\min}(t)=0$, we also know for the surface $\widetilde{S}'_{t}$ generated by $\widetilde{S}_{t}$ that

\begin{equation}
    \lim_{t \rightarrow T_{\max}} \max_{\widetilde{S'}_{t}} H =0. 
\end{equation}

Consider $\overline{\widetilde{S}_{t}}$ to be a graph $\widetilde{w_{t}}: [-\widetilde{a}_{t},\widetilde{a}_{t}] \rightarrow \mathbb{R}$ over the $x_{1}$ coordinate, where $\widetilde{a}_{t}=\frac{1}{u_{\min}(t)}a_{t}$ for $a_{t}$ as in \eqref{slope}. Notice $\widetilde{a}_{t} \rightarrow \infty$ as $t \rightarrow T_{\max}$. We verify that a subsequence of the functions $\widetilde{w}_{t}$ converge in $C^{2}_{\text{loc}}(\mathbb{R})$ as $t \rightarrow T_{\max}$. Fix a compact subset $K$ of $\mathbb{R}$, and pick $t_{0}$ sufficiently close to $T_{\max}$ so that $K \subset [-\widetilde{a}_{t_{0}},\widetilde{a}_{t_{0}}]$. For any $t \in (t_{0},T_{\max})$, we know also from \thref{slope} that the function

\begin{equation*}
    v_{t}(x) = (1+ |\widetilde{w}'_{t}|^{2})^{\frac{1}{2}}
\end{equation*}
is uniformly bounded in $t$ over $[-\widetilde{a}_{t},\widetilde{a}_{t}]$. This guarantees convergence of a subsequence as $t \rightarrow T_{\max}$ at least in $C^{0}(K)$. Now, we can write the mean curvature of the surfaces generated by $\widetilde{w}_{t}$ as 

\begin{equation} \label{H_graph}
    \widetilde{H}(x)= \frac{\widetilde{w}''}{v^{3}} - \frac{1}{\widetilde{w}v}.
\end{equation}
Rearranging gives

\begin{equation} \label{second}
    \widetilde{w}''_{t}(x)= v_{t}^{3}(x) \widetilde{H}_{t}(x) + \frac{v^{2}_{t}(x)}{\widetilde{w}_{t}(x)}. 
\end{equation}
Noting that $\widetilde{w}_{t} \geq 1$, we can immediately see from this that $\widetilde{w}''$ is uniformly bounded in $t$, yielding precompactness in $C^{1}(K)$. In fact, we can observe equicontinuity of $\widetilde{w}''_{t}$ in $t$: $\widetilde{H}_{t}(x) \rightarrow 0$ uniformly over $K$ as $t \rightarrow T_{\max}$ by the uniform bound on $H$ and the assumption that $\lim_{t \rightarrow T_{\max}} u_{\min}(t)=0$. We also have

\begin{eqnarray*}
    |(\frac{1}{\widetilde{w}_{t}(x)})'|&=&\frac{1}{\widetilde{w}_{t}(x)^{2}} |\widetilde{w}'_{t}(x)| \leq C(N_{0}) \\
     |v_{t}'(x)|&=& |\frac{\widetilde{w}'_{t}\widetilde{w}''_{t}}{((1+ |\widetilde{w}'_{t}|^{2})^{\frac{1}{2}})}| \leq C(N_{0}).
\end{eqnarray*}
in view of the bound from below on $\widetilde{w}_{t}$ and the bounds from above on the first two derivatives. Therefore, $v_{t}$ and $\frac{1}{
\widetilde{w}_{t}}$ are each equicontinuous and bounded over $K$. Since \eqref{second} gives $\widetilde{w}_{t}''$ in terms of sums and products of $\widetilde{H}_{t}$, $v_{t}$, and $\frac{1}{\widetilde{w}_{t}}$, it is also bounded and equicontinuous in $t$, meaning the $\widetilde{w}_{t}$ are precompact in $C^{2,0}(K)$.

Pass to a subsequence in $t$ with a $C^{2}_{\text{loc}}(\mathbb{R})$ limit $\widetilde{w}$ if neccessary. Since \eqref{H_graph} uniformly approaches $0$ over $[-\widetilde{a}_{t},\widetilde{a}_{t}]$, $\widetilde{w}$ must satisfy

\begin{equation*}
    \widetilde{w}''(x)= \frac{(1 + |\widetilde{w}'|^{2})}{\widetilde{w}}(x).
\end{equation*}
The only solution to this differential equation over $\mathbb{R}$ is the catenary

\begin{equation*}
    \widetilde{w}(x)= \frac{1}{\gamma} \cosh(\gamma x)
\end{equation*}
for some $\gamma >0$. If not, there would exist a complete minimal surface of revolution in $\mathbb{R}^{3}$ which is not a catenoid. In this context, since convergence to $\widetilde{w}(x)$ is also pointwise, we must have

\begin{equation*}
    \widetilde{w}(0)=1,
\end{equation*}
implying $\gamma=1$ (the next theorem will not depend on the precise value of $\gamma$, but we scale so that $\gamma=1$ for the sake of simplicity).
\end{proof}

We can now derive a contradiction using the estimate \eqref{est}.

\begin{theorem}
Let $N_{0}$ be an $H>0$ rotationally symmetric embedded torus, and $\{N_{t}\}_{0 \leq t < T_{\max}}$ the corresponding solution to \eqref{IMCF}. Then $\lim_{t \rightarrow T_{\max}} u_{\min}(t) >0$, and hence $\lim_{t \rightarrow T_{\max}} \max_{N_{t}} |A| \leq L < +\infty$.
\end{theorem}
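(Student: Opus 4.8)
The plan is to argue by contradiction with the hypothesis of \thref{cat} and then read off the curvature bound. Because the flow is expanding — the enclosed regions satisfy $E_{t_{1}} \subset E_{t_{2}}$ for $t_{1}<t_{2}$ by Theorem~4 in \cite{me}, as already exploited in the proof of \thref{graphs} — the lower graphs obey $w_{t_{2}} \le w_{t_{1}}$ on $[a_{t_{1}},b_{t_{1}}]$ while the domains $[a_{t},b_{t}]$ grow. Since $u_{\min}(t)=\min_{x} w_{t}(x)$, it follows that $u_{\min}$ is non-increasing in $t$, so $\lim_{t\to T_{\max}} u_{\min}(t)$ exists; suppose for contradiction it equals $0$.

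Under this assumption \thref{cat} applies: along a subsequence $t_{k}\nearrow T_{\max}$, the graphs $\widetilde{w}_{t_{k}}$ of $\widetilde{S}_{t_{k}}=\tfrac{1}{u_{\min}(t_{k})}S_{t_{k}}$ converge in $C^{2}_{\mathrm{loc}}(\mathbb{R})$ to $\cosh(x)$, and the domains $(-\widetilde{a}_{t_{k}},\widetilde{a}_{t_{k}})$ exhaust $\mathbb{R}$. The key point is that $\int_{S'_{t}}|K|\,d\mu$ is scale invariant: in dimension two $K$ carries the dimensions of (length)$^{-2}$ and $d\mu$ those of (length)$^{2}$, so writing $\widetilde{S}'_{t}=\tfrac{1}{u_{\min}(t)}S'_{t}$ for the surface generated by $\widetilde{S}_{t}$ we get $\int_{\widetilde{S}'_{t}}|K|\,d\mu=\int_{S'_{t}}|K|\,d\mu\le 4\pi(1-\epsilon)$ for all $t$, with $\epsilon=\epsilon(N_{0})>0$ from \eqref{est}. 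On the other hand, for a convex graph $f$ over $[-R,R]$ revolved about the $x_{1}$ axis one computes
\[
\int |K|\,d\mu \;=\; 2\pi\int_{-R}^{R}\frac{f''}{(1+|f'|^{2})^{3/2}}\,dx \;=\; 2\pi\left[\frac{f'}{\sqrt{1+|f'|^{2}}}\right]_{-R}^{R},
\]
which for $f=\cosh$ equals $4\pi\tanh R \to 4\pi$ as $R\to\infty$. Choose $R$ with $4\pi\tanh R>4\pi(1-\tfrac{\epsilon}{2})$. Once $k$ is large enough that $\widetilde{a}_{t_{k}}>R$, the surface of revolution of $\widetilde{w}_{t_{k}}|_{[-R,R]}$ is a genuine subregion of $\widetilde{S}'_{t_{k}}$, and by the $C^{1}$ part of the convergence its total curvature tends to $4\pi\tanh R$, hence exceeds $4\pi(1-\epsilon)$ for large $k$. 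But it is bounded above by $\int_{\widetilde{S}'_{t_{k}}}|K|\,d\mu\le 4\pi(1-\epsilon)$, a contradiction. Therefore $u_{*}:=\lim_{t\to T_{\max}}u_{\min}(t)>0$, and since $u_{\min}$ is non-increasing, $u_{\min}(t)\ge u_{*}$ for all $t\in[0,T_{\max})$.

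Finally, the curvature bound. On $N_{t}$ the two principal curvatures are $k$ (the curvature of the generating curve) and $p=\langle\nu,e_{2}\rangle u^{-1}$; since $u\ge u_{\min}(t)\ge u_{*}$ everywhere on $\rho_{t}$ and $|\langle\nu,e_{2}\rangle|\le 1$, we get $|p|\le 1/u_{*}$. The parabolic maximum principle gives $0<H\le\max_{N_{0}}H=:H_{0}$ along the flow, so $k=H-p$ satisfies $|k|\le H_{0}+1/u_{*}$. Hence
\[
\max_{N_{t}}|A|^{2}=\max_{N_{t}}\bigl(k^{2}+p^{2}\bigr)\le \Bigl(H_{0}+\tfrac{1}{u_{*}}\Bigr)^{2}+\tfrac{1}{u_{*}^{2}}=:L^{2}
\]
for every $t\in[0,T_{\max})$, which is the assertion. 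The main obstacle is the middle paragraph: one must pair the scale invariance of $\int|K|\,d\mu$ with the fact that the limiting catenoid carries exactly $4\pi$ of total curvature, and be careful to transfer the integral estimate across the $C^{2}_{\mathrm{loc}}$ convergence onto an honest compact subregion of each $\widetilde{S}'_{t_{k}}$ before the enclosed-region bound of \thref{slope} can be contradicted.
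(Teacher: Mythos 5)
Your proof is correct and follows essentially the same route as the paper: assume $u_{\min}(t)\to 0$, invoke \thref{cat} and the scale invariance of $\int|K|\,d\mu$, note the catenoid carries total curvature $4\pi$, and contradict the bound $4\pi(1-\epsilon)$ from \eqref{est} on a large compact subinterval before reading off the bound on $|A|$ from the bounds on $p$ and $H$. Your only refinements over the paper's version are minor and welcome: writing the integral as the boundary term $2\pi\bigl[f'/\sqrt{1+|f'|^{2}}\bigr]_{-R}^{R}$ so that $C^{1}$ convergence already suffices, and explicitly noting the monotonicity of $u_{\min}$ via $E_{t_{1}}\subset E_{t_{2}}$.
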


\begin{proof}
Suppose $\lim_{t \rightarrow T_{\max}} u_{\min}(t)=0$. According to the previous lemma, the functions $\widetilde{w}_{t}$ converge in $C^{2}_{\text{loc}}(\mathbb{R})$ to $\widetilde{w}(x)=\cosh(x)$. On the one hand, according to the Gauss curvature bound \eqref{est} which remains invariant under scaling, we should have

\begin{eqnarray} 
    \int_{\widetilde{S}'_{t}} |K| d\mu &=& 2\pi \int_{-\widetilde{a}_{t}}^{\widetilde{a}_{t}} (\frac{\widetilde{w}_{t}''}{(1+|\widetilde{w}_{t}'|^{2})^{\frac{3}{2}}})(\frac{1}{\widetilde{w}_{t}(1 + |\widetilde{w}_{t}'|^{2})^{\frac{1}{2}}}) \widetilde{w}_{t} (1+ |\widetilde{w}_{t}'|^{2})^{\frac{1}{2}} dx \label{small_int} \\ &=& 2\pi \int_{-\widetilde{a}_{t}}^{\widetilde{a}_{t}} \frac{\widetilde{w}_{t}''}{(1+ |\widetilde{w}_{t}'|^{2})^{\frac{3}{2}}} dx \leq 4\pi(1-\epsilon) \nonumber
\end{eqnarray}
for some $\epsilon=\epsilon(N_{0}) >0$. On the other hand, we may readily compute for $\widetilde{w}(x)=\cosh(x)$ that

\begin{equation*}
    2\pi \int_{\mathbb{R}} \frac{\widetilde{w}''}{(1+|\widetilde{w}'|^{2})^{\frac{3}{2}}} dx = 2\pi \int_{-\infty}^{\infty} \frac{\cosh(x)}{\cosh^{3}(x)} dx = 2\pi [\tanh(x)]^{\infty}_{\infty} = 4 \pi.
\end{equation*}
Fix a large enough interval $[-x_{0},x_{0}]$ so that

\begin{equation*}
2\pi \int_{-x_{0}}^{x_{0}} \frac{\widetilde{w}''}{(1+|\widetilde{w}'|^{2})^{\frac{3}{2}}} dx > 4\pi(1-\epsilon),
\end{equation*}
for $\epsilon$ is as in \eqref{small_int}. Then we would have $\frac{\widetilde{w}_{t}''}{(1+|\widetilde{w}_{t}'|^{2})^{\frac{3}{2}} }\rightarrow \frac{\widetilde{w}''}{(1+|\widetilde{w}'|^{2})^{\frac{3}{2}} }$ in $C^{0}([-x_{0},x_{0}])$ as $t \rightarrow T_{\max}$ but not in $L^{1}([-x_{0},x_{0}])$. This is a contradiction, so we cannot have the limit of $u_{\min}(t)$ equal to $0$. Then we also know for the original surface $N_{t}$ that

\begin{eqnarray*}
    \lim_{t \rightarrow T_{\max}} \max_{N_{t}} p &=& \lim_{t \rightarrow T_{\max}} \frac{1}{u_{\min}(t)} < +\infty, \\
    \lim_{t \rightarrow T_{\max}} \max_{N_{t}} k &\leq& \lim_{t \rightarrow T_{\max}} \max_{N_{t}} H + \lim_{t \rightarrow T_{\max}} \max_{N_{t}} p < +\infty.
\end{eqnarray*}
\end{proof}

A uniform-in-time bound on total curvature leads us to expect the existence of a smooth limit surface at the singular time without rescaling. Establishing this is actually nontrivial specifically in the context of IMCF, as we will explain in the next section.

\section{Convergence at $T_{\max}$}
Typically for extrinsic geometric flows, a uniform control on $\max_{N_{t}} |A|$ up to a time $T$ would imply uniform controls on all higher derivatives of $A$ as well as on the induced metric $g_{t}$ of $N_{t}$. These together would imply a smooth, non-degenerate limit surface $N_{T}$ at the time $T$. In the case of the rotationally symmetric expanding torus, the question of convergence at time $T=T_{\max}$ is more delicate: according to \cite{smoczyk} and Corollary 2.3 in \cite{huisken2}, singularities of IMCF are always characterized by the mean curvature $H$ of $N_{t}$ degnerating to $0$ somewhere. Therefore, although the total curvature $|A|$ remains bounded near $T_{\max}$ for this family of solutions, the flow speed $\frac{1}{H}$ is neccessarily blowing up. Since $\frac{1}{H}$ appears in the reaction terms of evolution equations for various geometric quantities, one cannot immediately establish $C^{\infty}$ convergence at the time $T_{\max}$ via a maximum principle.

We can, however, at least obtain $C^{1}$ convergence of the embeddings $F_{t}$ composed with appropriate diffeomorphisms to a map $\widetilde{F}_{T_{\max}}$ via the bound on $|A|$. Furthermore, according to a result from \cite{Langer1985ACT}, the induced metric $g_{t}$ cannot degenerate over as $t \rightarrow T_{\max}$ for any singular solution $\{ N_{t} \}_{0 \leq t < T_{\max}}$ to \eqref{IMCF} for which $\lim_{t \rightarrow T_{\max}} \max_{N_{t}} |A| < +\infty$.

\begin{proposition}
Let $F: N \times [0,T) \rightarrow \mathbb{R}^{3}$ be a solution to \eqref{IMCF} such that $T<+\infty$ and $\sup_{N \times [0,T)} |A| \leq L < +\infty$. Then for a subsequence of times $t_{k} \nearrow T$ and diffeomorphisms $\alpha_{k}: N \rightarrow N$, the immersions $\widetilde{F}_{t_{k}}= F_{t_{k}} \circ \alpha_{k}$ converge in $C^{1}$ topology to an immersion $\widetilde{F}_{T_{\max}}$ as $t \rightarrow T_{\max}$.
\end{proposition}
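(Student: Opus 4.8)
The plan is to reduce the statement to the compactness theorem of \cite{Langer1985ACT}: once one checks that the flow surfaces $\{N_t\}$ have uniformly bounded area, remain inside a fixed ball of $\mathbb{R}^{3}$, and (by hypothesis) satisfy $|A|\le L$, that theorem directly produces the subsequence $t_{k}\nearrow T$, the reparametrizations $\alpha_{k}\colon N\to N$, the $C^{1}$ (in fact $C^{1,\alpha}$ for every $\alpha<1$) limit $\widetilde F_{T}$, and --- crucially --- the non-degeneracy of the limit metric, which is what makes $\widetilde F_{T}$ an immersion rather than a collapsed map. So the work is to supply the first two items; the rest is the machinery.

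For the area bound I would differentiate along \eqref{IMCF}: since the speed is $\tfrac1H$ and $N$ is closed, $\frac{d}{dt}|N_{t}|=\int_{N_{t}}H\cdot\tfrac1H\,d\mu=|N_{t}|$, hence $|N_{t}|=e^{t}|N_{0}|\le e^{T}|N_{0}|=:\Lambda$ uniformly in $t\in[0,T)$, finite because $T<+\infty$. For confinement, note first that $n=2$ gives $H^{2}\le 2|A|^{2}\le 2L^{2}$ pointwise, so the flow speed is bounded below and the $N_{t}$ genuinely expand; meanwhile $|A|\le L$ caps the extrinsic diameter of each connected $N_{t}$ by some $D=D(L,\Lambda)$ --- a surface with uniformly bounded second fundamental form carries a graphical disk of definite radius and area about every point, so it cannot be simultaneously of bounded area and of large diameter. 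Combined with the fact (recorded after \thref{graphs}, via Theorem~4 of \cite{me}) that the enclosed regions $E_{t}$ form a nested increasing family all containing $E_{0}$, and that $\operatorname{diam}(\overline{E}_{t})=\operatorname{diam}(N_{t})\le D$, one gets $N_{t}\subset \overline{B}_{D}(p_{0})$ for any fixed $p_{0}\in E_{0}$ and all $t$. The same bound $|A|\le L$ also provides the uniform local structure needed to feed into \cite{Langer1985ACT}: near each point, $N_{t}$ is the graph over a disk of radius $r_{0}(L)$ in the tangent plane of a function with $C^{1,1}$ norm at most $C(L)$.

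Applying the compactness theorem to the sequence of immersions $F_{t_{k}}\colon N\to \overline{B}_{D}(p_{0})$ with area $\le\Lambda$ and $|A|\le L$ then yields diffeomorphisms $\alpha_{k}$ so that $F_{t_{k}}\circ\alpha_{k}\to\widetilde F_{T}$ in $C^{1,\alpha}$, in particular in $C^{1}$, with $\widetilde F_{T}$ an immersion. I expect the only genuinely delicate point to be the non-degeneracy of the limit metric: a bare Arzel\`a--Ascoli argument built only from the $C^{1,1}$ graph bounds controls the extrinsic picture but not the intrinsic injectivity radius, and a priori the $N_{t}$ could pinch along a shrinking neck, so that the $\alpha_{k}$ concentrate and the limit collapses to a lower-dimensional or non-immersed object. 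Ruling this out is exactly the work done inside \cite{Langer1985ACT}, where a lower bound on an injectivity-radius-type quantity is extracted from the area and curvature bounds; this is why the Proposition is stated as an application of that theorem rather than proved by hand. For the torus of \thref{main} no such pinching occurs, but since the Proposition is asserted for an arbitrary finite-time singular solution, the metric-degeneration issue must be confronted at this level of generality, and it is the heart of the matter.
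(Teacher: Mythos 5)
Your argument is essentially the paper's: both derive the area growth law $|N_{t}|=e^{t}|N_{0}|$ from the flow equation and then feed the resulting area bound together with the hypothesis $|A|\le L$ (the paper phrases this as a uniform $L^{p}$ bound on $A$) into Langer's compactness theorem \cite{Langer1985ACT}, which produces the reparametrizations and the $C^{1}$ immersed limit. The only substantive difference is that you explicitly check confinement in a fixed ball, a hypothesis of Langer's theorem that the paper leaves unstated; note, though, that your nesting step via Theorem~4 of \cite{me} is specific to the embedded rotationally symmetric setting, whereas the Proposition is phrased for an arbitrary solution, so in full generality one would want a confinement argument (e.g.\ via the enclosed-volume growth and the diameter bound from $|A|\le L$) that does not presuppose embeddedness.
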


\begin{proof}
An elementary computation shows that the area under IMCF satisfies

\begin{equation*}
    \frac{d}{dt} |N_{t}| = \int_{N_{t}} \langle \frac{\partial N_{t}}{\partial t}, H \nu \rangle d \mu = \int_{N_{t}} d\mu = |N_{t}|,
\end{equation*}
meaning

\begin{equation} \label{area}
    |N_{t}|=e^{t} |N_{0}|.
\end{equation}
Therefore, the $L^{p}$ norm of $A$ for $p < +\infty$ is also uniformly bounded in time

\begin{equation*}
    \norm{A}_{p} = (\int_{N_{t}} |A|^{p} d\mu)^{\frac{1}{p}} \leq L^{\frac{1}{p}} e^{\frac{T}{p}} N_{0}.
\end{equation*}

According to the Compactness Theorem due to Langer from \cite{Langer1985ACT}, the maps $\widetilde{F}_{t_{k}}=F_{t_{k}} \circ \alpha_{k}$ converge in $C^{1}$ topology to an immersion $\widetilde{F}_{T_{\max}}$ as $t \rightarrow T_{\max}$. 

\end{proof}

In the context of the rotationally symmetric embedded torus, we can deduce that the limit surface should also be embedded. \\

\noindent \textit{Proof of \thref{main}}. Consider the generating curve $\rho_{t}$ of $N_{t}$. According to \thref{graphs}, for every $t \in [0,T_{\max})$ $\rho_{t}$ is the union of two disjoint graphs for two functions $w_{t}$ and $v_{t}$ which are each respectively monotone in time. Then repeating the same argument as in the proof of \thref{graphs}, $w_{t} \searrow w_{T_{\max}}$ and $v_{t} \nearrow v_{T_{\max}}$ in $C^{0}_{\text{loc}}(a_{T_{\max}}, b_{T_{\max}})$. Also by this argument, $w_{T_{\max}}(x) < v_{T_{\max}}(x)$ for $x \in (a_{T_{\max}}, b_{T_{\max}})$, and $\text{graph}(w_{T_{\max}}) \cup \text{graph}(v_{T_{\max}})$ is a closed curve. 

By the previous proposition, $\rho_{t}$ also converges to an immersed, closed $C^{1}$ curve $\rho_{T_{\max}}$ as $t \rightarrow T_{\max}$, so by uniqueness of limits $\rho_{T_{\max}}$ is embedded. Hence $\rho_{T_{\max}}$ generates an embedded torus in $\mathbb{R}^{3}$. 

\qed

\section{The General Case}
The natural question that this result raises is whether or not $|A|$ remains bounded and a limit surface will always exist at any singularity of \eqref{IMCF} without rescaling. Though we cannot currently provide a full answer to this question, we at least give some evidence that curvature will remain bounded over any singular solution of \eqref{IMCF} with $n=2$. We can once again obtain a stronger profile on more general flow behavior through an argument similar to the one in \thref{gauss}: for a solution $\{ N_{t}\}_{0 \leq t < T}$ in $\mathbb{R}^{3}$, Gauss-Bonnet will always provide a valuable energy estimate. First, we briefly note a simple $L^{2}$ estimate on mean curvature.


\begin{lemma} \thlabel{wilmore}
Let $\{ N_{t} \}_{0 \leq t <T}$ be a solution to \eqref{IMCF} in $\mathbb{R}^{3}$. Then $\int_{N_{t}} H^{2} d\mu \leq \sup_{N_{0}} H^{2} |N_{0}|$.
\end{lemma}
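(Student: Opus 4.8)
The plan is to compute the evolution of $\int_{N_t} H^2\, d\mu$ along the flow and show it is non-increasing, then conclude by comparison with its value at $t=0$ together with the area identity \eqref{area}. First I would record the standard evolution equations under IMCF \eqref{IMCF}: writing the speed as $f = 1/H$, the induced measure evolves by $\partial_t (d\mu) = Hf\, d\mu = d\mu$ (which is the source of \eqref{area}), and the mean curvature evolves by the well-known parabolic equation
\begin{equation*}
\frac{\partial H}{\partial t} = -\Delta f - |A|^2 f = \frac{\Delta H}{H^2} - \frac{2|\nabla H|^2}{H^3} - \frac{|A|^2}{H}.
\end{equation*}
From these two, differentiating under the integral sign,
\begin{equation*}
\frac{d}{dt} \int_{N_t} H^2\, d\mu = \int_{N_t} \left( 2H\,\partial_t H + H^2 \right) d\mu = \int_{N_t} \left( \frac{2\Delta H}{H} - \frac{4|\nabla H|^2}{H^2} - 2|A|^2 + H^2 \right) d\mu.
\end{equation*}

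Next I would integrate by parts on the closed manifold $N_t$: since $\int_{N_t} \frac{\Delta H}{H}\, d\mu = \int_{N_t} \frac{|\nabla H|^2}{H^2}\, d\mu$, the first two terms combine to $-2\int_{N_t} |\nabla H|^2 / H^2\, d\mu \le 0$. For the remaining terms I use the pointwise inequality $H^2 = (k_1+k_2)^2 \le 2(k_1^2 + k_2^2) = 2|A|^2$ valid for the two principal curvatures in the surface case $n=2$, so that $-2|A|^2 + H^2 \le 0$ pointwise. Hence every term is non-positive and $\frac{d}{dt}\int_{N_t} H^2\, d\mu \le 0$, so $\int_{N_t} H^2\, d\mu \le \int_{N_0} H^2\, d\mu$ for all $t$. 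Finally, bounding the initial integral crudely by $\int_{N_0} H^2\, d\mu \le (\sup_{N_0} H^2)\,|N_0|$ gives the stated estimate.

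The only genuinely delicate point is justifying differentiation under the integral sign and the integration by parts up to the singular time — but since the lemma is stated for a smooth solution on $[0,T)$ and the bound is claimed at each fixed $t < T$, this is a routine application of the standard smoothness of the flow on compact time subintervals; no blow-up control is needed. The main structural idea worth highlighting is simply that both the diffusion term (after integration by parts) and the reaction term $-2|A|^2 + H^2$ have a favorable sign in dimension $n=2$, which is exactly the feature that makes this a clean monotonicity statement rather than a Grönwall-type estimate.
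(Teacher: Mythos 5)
Your argument is correct but takes a genuinely different route from the paper's. The paper applies the parabolic maximum principle to the rescaled quantity $e^{t/n}H$ using the same evolution equation for $H$, obtaining the pointwise decay bound $H(x,t) \leq e^{-t/n}\sup_{N_0}H$ (this is \eqref{H}), and then multiplies by the exponential area growth $|N_t| = e^t|N_0|$ to conclude for $n=2$. You instead differentiate $\int_{N_t} H^2\,d\mu$ in time, integrate by parts, and observe that both the diffusion contribution $-2\int |\nabla H|^2/H^2$ and the reaction term $H^2 - 2|A|^2$ are non-positive in dimension two, so the Willmore energy is monotone non-increasing. Your computation checks out, is self-contained, and in fact yields the slightly sharper statement $\int_{N_t}H^2\,d\mu \leq \int_{N_0}H^2\,d\mu$, from which the claimed bound follows trivially. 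The trade-off is that your integral argument does not produce the pointwise supremum bound on $H$, which the paper reuses: in the proof of \thref{cat} to force the rescaled mean curvature to vanish uniformly, and in \thref{L2} to bound $K_+$ above. So the paper's maximum-principle detour does double duty elsewhere in the argument. It is also worth noting that both proofs ultimately hinge on the same algebraic inequality $n|A|^2 \geq H^2$, applied pointwise by the paper inside the maximum principle and applied pointwise by you under the integral sign.
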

\begin{proof}
From \cite{huisken}, the evolution equation for $H$ under \eqref{IMCF} is

\begin{equation*}
    (\partial_{t} - \frac{1}{H^{2}} \Delta) H = -\frac{|A|^{2}}{H} - 2 \frac{|\nabla H|^{2}}{H^{3}}.
\end{equation*}
Therefore, for any $n$ we have for the function $f(x,t)= e^{\frac{t}{n}} H(x,t)$ that

\begin{equation*}
    (\partial_{t} - \frac{1}{H^{2}} \Delta) f = \frac{1}{n} f - \frac{|A|^{2}}{H^{2}} f \leq 0,
\end{equation*}
at any spacetime interior maximum of $f$ in $N \times [0,T)$, implying 

\begin{equation} \label{H}
    H(x,t) \leq e^{-\frac{t}{n}} \sup_{N_{0}} H
\end{equation}
by the parabolic maximum principle. Recalling equation \eqref{area} we also know $|N_{t}|=e^{t}|N_{0}|$. Then
combining this with \eqref{H} for the case $n=2$ yields

\begin{equation*}
\int_{N_{t}} H^{2} d\mu \leq (\sup_{N_{0}} H^{2}) |N_{0}|. 
\end{equation*}

\end{proof}

\begin{proposition}[$L^{2}$ Estimate on $|A|$] \thlabel{L2}
Let $\{ N_{t} \}_{0 \leq t < T}$ be a solution to \eqref{IMCF} in $\mathbb{R}^{3}$. Then we have the time-independent estimate

\begin{equation} \label{energy}
    \int_{N_{t}} |A|^{2} d\mu \leq 3 \sup_{N_{0}}H^{2} |N_{0}| - 2 \pi \chi(N),
\end{equation}
where $\chi(N)$ is the Euler Characteristic of $N$.
\end{proposition}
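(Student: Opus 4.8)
The plan is to combine the Gauss--Bonnet identity in dimension two with the pointwise algebraic relation between $|A|^2$, $H^2$, and the Gauss curvature $K$, and then invoke \thref{wilmore}. Concretely, for a surface $N_t \subset \mathbb{R}^3$ with principal curvatures $\kappa_1,\kappa_2$ one has $|A|^2 = \kappa_1^2 + \kappa_2^2$ and $H^2 = (\kappa_1+\kappa_2)^2 = |A|^2 + 2\kappa_1\kappa_2 = |A|^2 + 2K$, so that $|A|^2 = H^2 - 2K$ pointwise. Integrating this identity over $N_t$ gives $\int_{N_t} |A|^2\, d\mu = \int_{N_t} H^2\, d\mu - 2\int_{N_t} K\, d\mu$.

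Next I would handle the two terms on the right. For the first, \thref{wilmore} already gives $\int_{N_t} H^2\, d\mu \leq \sup_{N_0} H^2\, |N_0|$. For the second, the Gauss--Bonnet Theorem for the closed surface $N_t$ states $\int_{N_t} K\, d\mu = 2\pi\chi(N)$, which is exactly time-independent (it depends only on the topological type of $N$, which is fixed along the flow). Substituting both facts yields $\int_{N_t} |A|^2\, d\mu \leq \sup_{N_0} H^2\, |N_0| - 4\pi\chi(N)$. To match the stated constant $3\sup_{N_0} H^2\, |N_0| - 2\pi\chi(N)$, I expect the intended bookkeeping is slightly different: one instead writes $|A|^2 = 2H^2 - (H^2 + 2K)$... more cleanly, use $2K = H^2 - |A|^2$ to get $\int |A|^2 = 2\int H^2 - \int(H^2 + 2K)$, and then bound $\int H^2$ twice by \thref{wilmore} while identifying $\int(H^2+2K)$ appropriately; alternatively the ``$3$'' and ``$2\pi$'' come from using $|A|^2 \le H^2$ is false, so the honest route is $\int |A|^2 = \int H^2 - 4\pi\chi(N) \le \sup_{N_0}H^2|N_0| - 4\pi\chi(N)$, and the proposition's constants should simply be read as a (non-optimal) consequence, e.g. via $-4\pi\chi(N) \le -2\pi\chi(N)$ when $\chi(N)\le 0$ together with a crude $\int H^2 \le 3\sup H^2 |N_0|$ slack. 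I would present the clean inequality and then remark that \eqref{energy} follows a fortiori.

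There is essentially no analytic obstacle here: the argument is a two-line computation once \thref{wilmore} and Gauss--Bonnet are in hand, and both are available. The only point requiring a sentence of care is that Gauss--Bonnet applies because $N_t$ is a \emph{closed} (boundary-free) surface for each $t$ in the domain of the flow — in contrast to \thref{gauss}, where the relevant piece $S'_t$ has boundary and geodesic-curvature terms appear. I would also note explicitly that scale invariance of $\int_{N_t}|A|^2\,d\mu$ in dimension $n=2$ (since $|A|^2$ scales like (length)$^{-2}$ and $d\mu$ like (length)$^{2}$) is what makes this estimate potentially useful for blow-up arguments, as advertised in the introduction, but that observation is not needed for the proof itself.
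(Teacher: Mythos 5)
Your core computation is correct, and it is a genuinely different --- and simpler --- route than the paper takes. You use the closed-surface Gauss--Bonnet identity $\int_{N_t}K\,d\mu = 2\pi\chi(N)$ directly in the pointwise relation $|A|^2 = H^2 - 2K$ and then invoke \thref{wilmore}, giving
\begin{equation*}
\int_{N_t}|A|^2\,d\mu \;=\; \int_{N_t}H^2\,d\mu \;-\; 4\pi\chi(N) \;\leq\; \sup_{N_0}H^2\,|N_0| \;-\; 4\pi\chi(N),
\end{equation*}
which is in fact sharper than \eqref{energy}. The paper's proof instead splits $K = K_+ + K_-$, bounds $K_+$ pointwise from above via the decay estimate \eqref{H} on $H$ (at points where $K>0$ both principal curvatures are positive and dominated by $H$, so $K \leq H^2 \leq e^{-t}\sup_{N_0}H^2$), integrates this against the area law \eqref{area} to get $\int K_+\,d\mu \leq \sup_{N_0}H^2\,|N_0|$, and then solves for a lower bound on $\int K_-\,d\mu$ from Gauss--Bonnet before substituting into $\int|A|^2 = \int H^2 - 2\int K_+ - 2\int K_-$. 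That route yields $3\sup_{N_0}H^2\,|N_0| - 4\pi\chi(N)$; the factor $2\pi\chi(N)$ in the displayed statement appears to be a slip for $4\pi\chi(N)$, invisible in the torus case $\chi(N)=0$. The $K_\pm$ split buys nothing for this global estimate, so your direct version is strictly preferable and should simply be stated as the conclusion.

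Two smaller corrections. First, the speculative rearrangements (``$|A|^2 = 2H^2 - (H^2+2K)$'', etc.) do not correspond to the paper's argument and should be deleted. Second, your closing ``a fortiori'' remark is not quite right as written: the implication $\sup_{N_0}H^2|N_0| - 4\pi\chi(N) \leq 3\sup_{N_0}H^2|N_0| - 2\pi\chi(N)$ reduces to $-\pi\chi(N) \leq \sup_{N_0}H^2|N_0|$, which is automatic for $\chi(N)\geq 0$ but is a separate claim when $\chi(N)<0$; moreover $-4\pi\chi(N)\leq -2\pi\chi(N)$ requires $\chi(N)\geq 0$, not $\chi(N)\leq 0$ as you wrote. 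Since your bound is already the stronger one, there is no reason to try to recover the paper's constants at all.
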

\begin{proof}
At any point $x \in N_{t}$ where the Gauss curvature $K$ of $N_{t}$ is positive, the two principal curvatures $\lambda_{1}(x)$ and $\lambda_{2}(x)$ of $N_{t}$ are also positive, since $\lambda_{1}(x) + \lambda_{2}(x)=H >0$ as long as the solution exists. In this case, we know that at these points

\begin{equation*}
\lambda_{1}(x) \leq H (x) \leq e^{\frac{-t}{2}} \sup_{N_{0}} H
\end{equation*}
in view of \eqref{H}, and likewise for $\lambda_{2}(x)$. This yields
\begin{equation}
    K \leq e^{-t} \sup_{N_{0}} H^{2}.
\end{equation}
Now write $K=K_{-} + K_{+}$, where $K_{-}=\min\{K,0\}$ and $K_{+}=\max\{K,0\}$. According to the above estimate, the area formula \eqref{area}, and Gauss-Bonnet, we have

\begin{equation*}
    \int_{N_{t}} K_{-} d \mu = 2\pi \chi(N_{0}) - \int_{N_{t}} K_{+} d\mu \geq 2\pi \chi(N) - \sup_{N_{0}} H^{2} |N_{0}|.
\end{equation*}
We remark that since the Gauss-Bonnet Theorem is intrinsic, see \cite{chern44}, this estimate applies to immersed solutions rather than exclusively to embedded ones. Finally, in $n=2$ we can write

\begin{equation*}
    \int_{N_{t}} |A|^{2} d\mu = \int_{N_{t}} H^{2} d\mu - 2 \int_{N_{t}} K d \mu.
\end{equation*}
The result follows in view of the previous lemma.
\end{proof}

\begin{remark}
Whether $\norm{A}_{L^{1}} < + \infty$ for any solution of MCF in $\mathbb{R}^{3}$ is still unknown, see \textup{\cite{du2020bounded}, \cite{head2013}, \cite{gianniotis2017diameter}}. Therefore, \eqref{energy} indicates better regularity in general near singularities for IMCF.
\end{remark}

This simple estimate suggests that, at the very least, there is likely no metric degeneration for any singular solution of IMCF in $\mathbb{R}^{3}$. Indeed, since $n=2$ this estimate corresponds to the borderline case for the Sobolev inequality, so an upgrading this to some $L^{p}$ estimate for some $p>2$ would allow one to conclude the existence of a limit immersion modulo diffeomorphisms by Langer's Compactness Theorem from \cite{Langer1985ACT}.

The estimate also hints that the procedure in section 4 may generalize to establish that $|A|$ may always remain bounded in $L^{\infty}$ norm. If $\lim_{t \rightarrow T_{\max}} \max_{x \in N_{t}} |A|(x)= + \infty$ then one would expect a sequence of times $t_{i} \rightarrow T_{\max}$ and corresponding scale factors $\lambda_{i} \rightarrow +\infty$ such that the surfaces $\widetilde{N}_{t_{i}}= \lambda_{i} N_{t_{i}}$ converge in $C^{2}$ to some non-compact limit $\widetilde{N}_{T_{\max}}$. Since \eqref{energy} is scale-invariant like with \eqref{est}, the estimate would also apply to this limit, and due to the upper bound on $\max_{N_{t}} H$ the limit $\widetilde{N}_{T_{\max}}$ must be minimal.

The estimates on mean and total curvature may allow one to rule out all candidate blowup limits of $\tilde{N}_{t}$ for scale factors tending do infinity, leading us to the following conjecture.

\begin{conjecture}
Let $\{N_{t}\}_{0 \leq t < T_{\max}}$ be a solution to \eqref{IMCF} in $\mathbb{R}^{3}$. Then $\max_{N \times [0,T_{\max})} |A| (x,t) < +\infty$.
\end{conjecture}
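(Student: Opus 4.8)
\emph{Proof idea.} The plan is to argue by contradiction, implementing the blow-up scheme sketched above with \thref{wilmore} and \thref{L2} as the quantitative input. Suppose $\sup_{N\times[0,T_{\max})}|A|=+\infty$, and choose $t_i\nearrow T_{\max}$ together with points $p_i\in N_{t_i}$ realizing $\lambda_i:=\max_{N_{t_i}}|A|\to+\infty$; then $|A|\le\lambda_i$ on all of $N_{t_i}$, while by \eqref{H} the mean curvature there is $\le e^{-t_i/2}\sup_{N_0}H$, in particular bounded. Form the rescaled, recentred surfaces $\widetilde N_i:=\lambda_i\big(N_{t_i}-F_{t_i}(p_i)\big)$, so that $|\widetilde A|\le 1$ on $\widetilde N_i$ and $|\widetilde A|(0)=1$. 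Because the second fundamental form is uniformly bounded, the $\widetilde N_i$ are locally uniformly graphical with uniform $C^{1,1}$ graph estimates, and a standard pointed compactness argument (in the spirit of Langer's theorem \cite{Langer1985ACT}, chasing the surface graph by graph) extracts a subsequence converging in $C^{1,\alpha}_{\mathrm{loc}}$ to a complete immersed surface $\widetilde N_\infty\subset\mathbb{R}^3$ through $0$ with $|A|(0)=1$, in particular non-flat.

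Next I would identify $\widetilde N_\infty$ as minimal and of finite total curvature. Mean curvature scales like inverse length, so $\widetilde H_i=O(\lambda_i^{-1})\to 0$ uniformly on compact sets by the bound just recorded; passing to the limit (and bootstrapping via the minimal surface equation) gives $H\equiv 0$ on $\widetilde N_\infty$. Since $\mathbb{R}^3$ carries no closed minimal surface and $\widetilde N_\infty$ is non-flat, $\widetilde N_\infty$ is complete and non-compact. Now I bring in \thref{L2}: since $\int_{N_t}|A|^2\,d\mu$ is scale invariant in dimension two, the bound $\int_{N_{t_i}}|A|^2\,d\mu\le 3\sup_{N_0}H^2|N_0|-2\pi\chi(N)=:\Lambda$ survives rescaling, and lower semicontinuity of the total curvature under the above convergence gives $\int_{\widetilde N_\infty}|A|^2\,d\mu\le\Lambda<+\infty$. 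Thus the component of $\widetilde N_\infty$ through $0$ is a complete, non-flat minimal surface of finite total curvature; by the Chern--Osserman theory this forces $\int_{\widetilde N_\infty}|A|^2\,d\mu=-2\int_{\widetilde N_\infty}K\,d\mu=8\pi m$ for some integer $m\ge 1$, i.e.\ $\int_{\widetilde N_\infty}|A|^2\,d\mu\ge 8\pi$, with equality only for a catenoid or Enneper's surface.

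To close the argument one now needs $\Lambda<8\pi$, and this is exactly where the scheme, as it stands, stalls: $\Lambda=3\sup_{N_0}H^2|N_0|-2\pi\chi(N)$ depends on $N_0$ and is in general far larger than $8\pi$, so the \emph{global} Gauss--Bonnet estimate \thref{L2} is by itself too crude. The main obstacle is therefore to produce a genuinely \emph{local} energy estimate at the blow-up point --- the analogue, for an arbitrary singular solution, of the localized bound $\int_{S'_t}|K|\,d\mu\le 4\pi(1-\epsilon)$ in \thref{gauss}, which was precisely what contradicted the catenoid blow-up in the torus case. Concretely, I would try to apply Gauss--Bonnet on a small geodesic disc of $N_t$ centred near the set where $H\to 0$ and show that the total curvature trapped in the region which rescales to a fixed ball is at most $8\pi-\delta$; the delicate part is controlling the geodesic-curvature boundary term, which in the torus setting came for free from embeddedness and the enclosure monotonicity of \cite{me} but has no obvious general substitute. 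An alternative --- noted after \thref{L2} --- is to upgrade \eqref{energy} to an $L^p$ bound on $|A|$ with $p>2$, which by the borderline Sobolev inequality in dimension two rules out \emph{any} concentration of curvature and hence the conjecture outright; obtaining such an improvement appears to require an ingredient genuinely beyond the Gauss--Bonnet theorem. One might also hope to exploit that each $\widetilde N_i$ has $H>0$, so that $\widetilde N_\infty$ is a one-sided limit of strictly mean-convex surfaces and perhaps stable, cutting the candidate list down; but since the catenoid is not obviously excluded in this way, I expect the localized energy estimate to remain the crux.
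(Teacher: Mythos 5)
The statement you were asked about is a \emph{conjecture} in the paper, not a theorem: the paper offers no proof, and the paragraph immediately preceding it sketches exactly the blow-up scheme you lay out --- rescale near a curvature concentration, use the decay estimate $H\le e^{-t/2}\sup_{N_0}H$ to force the limit to be minimal, and invoke the scale-invariant $L^2$ bound on $|A|$ from Proposition 6.3 --- before explicitly leaving the conclusion open. Your write-up is therefore not a proof (nor does it claim to be), but it is a faithful and somewhat more detailed reconstruction of the heuristic the paper has in mind.

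Your diagnosis of where the argument stalls is correct and is precisely why the statement remains a conjecture. The global estimate $\int_{N_t}|A|^2\,d\mu\le 3\sup_{N_0}H^2\,|N_0|-2\pi\chi(N)$ is scale-invariant but depends on initial data, and in general it far exceeds the $8\pi$ threshold that Chern--Osserman theory would require to exclude a catenoid or Enneper blow-up limit. In the torus case the contradiction came from a \emph{localized} bound $\int_{S_t'}|K|\,d\mu\le 4\pi(1-\epsilon)$, whose sharpness beat the $4\pi$ total curvature of the catenoid, and that localization leaned on rotational symmetry, embeddedness, and the enclosure monotonicity from the cited reference --- none of which is available for an arbitrary solution. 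You correctly identify the missing ingredient as a local energy estimate with control on the geodesic-curvature boundary term (or, as the paper also notes, an upgrade of the $L^2$ bound to $L^p$ for some $p>2$ so as to leave the borderline Sobolev regime). Both remain open, so your honest identification of the obstruction is the substantive content here; there is no proof in the paper to compare against.
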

\appendix
\printbibliography

@article{huisken,
    author    = "Gerhard Huisken and Tom Illamen",
    title=      "{The Inverse Mean Curvature Flow and the Riemannian Penrose Inequality}",
   journal =      "J. Differential Geometry",
  volume =       "59",
  number =       "3",
  pages =        "353 -- 437",
  year =         "2001",
  DOI =          "doi:10.4310/jdg/1090349447",
}

@article{smoczyk,
author = "Knut Smoczyk",
title= "{Remarks on the Inverse Mean Curvature Flow}",
journal= "Asian Journal of Mathematics",
volume= "4",
number= "2",
pages= "331-336",
year="2000",

}

@article{claus,
author = "Gerhardt, Claus",
title= "{Flow of Nonconvex Hypersurfaces into Spheres}",
journal= "Journal of Differential Geometry",
volume= "32",
pages= "299-314",
year="1990",

}

@article{huisken2,
author = {Huisken, Gerhard and Ilmanen, Tom},
year = {2008},
month = {11},
pages = {},
title = {Higher regularity of the inverse mean curvature flow},
volume = {80},
journal = {Journal of Differential Geometry - J DIFFEREN GEOM},
doi = {10.4310/jdg/1226090483}
}

@article{Urbas,
author= {Urbas, John I.E.},
year = {1990},
journal= {Mathematische Zeitschrift},
volume= {205(3)},
pages= {355-372},
title = {On the Expansion of Starshaped Hypersurfaces by Symmetric Functions of their Principal Curvatures},
doi= {10.1007/BF02571249},
}

@article{ecker2,
author= {Ecker, Klaus and Gerhard Huisken},
year= {1991},
journal= {Invent Math},
volume= {105},
pages= {547-569},
title= {Interior Estimates for Hypersurfaces Moving by Mean Curvature},
doi={10.1007/BF01232278}
}

@article{me,
author= {Harvie, Brian},
year= {2020},
month= {8},
title= {Inverse Mean Curvature Flow over Non-Star-Shaped Surfaces},
journal={Math. Res. Letters},
 eprint={1909.01328},
      archivePrefix={arXiv},
      primaryClass={math.DG}
}

@article{huisken1990,
author = "Huisken, Gerhard",
doi = "10.4310/jdg/1214444099",
fjournal = "Journal of Differential Geometry",
journal = "J. Differential Geom.",
number = "1",
pages = "285--299",
publisher = "Lehigh University",
title = "Asymptotic behavior for singularities of the mean curvature flow",
url = "https://doi.org/10.4310/jdg/1214444099",
volume = "31",
year = "1990"
}

@article{me2,
      title={Inverse Mean Curvature Flow of Rotationally Symmetric Hypersurfaces}, 
      author={Brian Harvie},
      year={2020},
      eprint={2008.07490},
      archivePrefix={arXiv},
      primaryClass={math.DG}
}

@misc{du2020bounded,
    title={Bounded Diameter Under Mean Curvature Flow},
    author={Wenkui Du},
    year={2020},
    eprint={2004.03769},
    archivePrefix={arXiv},
    primaryClass={math.DG}
}

@misc{gianniotis2017diameter,
    title={Diameter and curvature control under mean curvature flow},
    author={Panagiotis Gianniotis and Robert Haslhofer},
    year={2017},
    eprint={1710.10347},
    archivePrefix={arXiv},
    primaryClass={math.DG}
}

@article{head2013,
author = "Head, John",
doi = "10.4310/jdg/1367438649",
fjournal = "Journal of Differential Geometry",
journal = "J. Differential Geom.",
month = "06",
number = "2",
pages = "241--266",
publisher = "Lehigh University",
title = "On the Mean Curvature Evolution of Two-Convex Hypersurfaces",
url = "https://doi.org/10.4310/jdg/1367438649",
volume = "94",
year = "2013"
}

@article{Langer1985ACT,
  title={A compactness theorem for surfaces withLp-bounded second fundamental form},
  author={Joel Langer},
  journal={Mathematische Annalen},
  year={1985},
  volume={270},
  pages={223-234}
}

@article{chern44,
    title={A Simple Intrinsic Proof of the Gauss-Bonnet Formula for Closed Riemannian Manifolds},
    author={Shiing-Shen Chern},
    year={1944},
    journal={Annals of Mathematics},
    volume={45},
    number={4},
    pages={747-752}
}

\begin{center}
\textnormal{ \large Department of Mathematics,
University of California, Davis \\
Davis, CA 95616\\
e-mail: bharvie@math.ucdavis.edu}\\
\end{center}
\end{document}